\title{Quantifying Training Difficulty and Accelerating Convergence in Neural Network-Based PDE Solvers\thanks{Corresponding authors
\funding{
This work of T.L. is sponsored by the National Key R\&D Program of China Grant No. 2022YFA1008200 (T. L.), the National Natural Science Foundation of China Grant No. 12101401 (T. L.),  Shanghai Municipal Science and Technology Key Project No. 22JC1401500 (T. L.). The work of Y.X. was supported by the Project of Hetao Shenzhen-HKUST Innovation Cooperation Zone HZQB-KCZYB-2020083.}}}
\author{
Chuqi Chen\thanks{Department of Mathematics, Hong Kong University of Science and Technology,
	Clear Water Bay, Hong Kong SAR, China} (\email{cchenck@connect.ust.hk})
\and Qixuan Zhou\thanks{School of Mathematical Sciences, Shanghai Jiao Tong University, Shanghai, China}  (\email{zhouqixuan@sjtu.edu.cn})
\and Yahong Yang\thanks{Department of Mathematics,
	The Pennsylvania State University, University Park, State College, PA, USA} (\email{yxy5498@psu.edu})
 \and Yang Xiang\footnotemark[2],\thanks{Algorithms of Machine Learning and Autonomous Driving Research Lab,
    HKUST Shenzhen-Hong Kong Collaborative Innovation Research Institute,
    Futian, Shenzhen, China}(\email{maxiang@ust.hk})
    \and Tao Luo$^{*}$\thanks{School of Mathematical Sciences, Institute of Natural Sciences, MOE-LSC
Shanghai Jiao Tong University, CMA-Shanghai, Shanghai Artificial Intelligence Laboratory, Shanghai, China}(\email{luotao41@sjtu.edu.cn})}
\pgfplotsset{compat=1.13}
\newcommand{\D}{\mathrm{d}}
\newcommand{\E}{\mathrm{e}}
\newcommand*\diff{\mathop{}\!\mathrm{d}}
\newcommand{\vtheta}{\bm{\theta}}
\newcommand{\vsigma}{\bm{\sigma}}
\newcommand{\va}{\bm{a}}
\newcommand{\vb}{\bm{b}}
\newcommand{\ve}{\bm{e}}
\newcommand{\vk}{\bm{k}}
\newcommand{\vq}{\bm{q}}
\newcommand{\vw}{\bm{w}}
\newcommand{\vx}{\bm{x}}
\newcommand{\vy}{\bm{y}}
\newcommand{\mPhi}{\bm{\Phi}}
\newcommand{\mA}{\bm{A}}
\newcommand{\mG}{\bm{G}}
\newcommand{\mI}{\bm{I}}
\newcommand{\mQ}{\bm{Q}}
\newcommand{\sR}{\mathbb{R}}
\definecolor{myblack}{RGB}{53, 53, 53}
\definecolor{myblue}{RGB}{40, 75, 99}
\definecolor{myred}{RGB}{192, 50, 33}
\definecolor{myyellow}{RGB}{255, 166, 48}
\definecolor{mywhite}{RGB}{240, 237, 238}
\definecolor{mygreen}{RGB}{0, 102, 0}
\definecolor{green1}{RGB}{9, 82, 86}
\definecolor{green2}{RGB}{8, 127, 140}
\definecolor{green3}{RGB}{6, 167, 125}
\definecolor{green4}{RGB}{79, 109, 122}
\definecolor{green5}{RGB}{192, 214, 223}
\definecolor{violet}{RGB}{26,69,131}
\definecolor{checkgreen}{rgb}{0,0.6,0}
\definecolor{phase1}{rgb}{0.008,0.655,1.000}
\definecolor{phase2}{rgb}{0.016,0.75,0.700}
\definecolor{phase3}{rgb}{0.929,0.35,0.700}
\definecolor{icsyellow}{cmyk}{0.00,0.11,0.53,0.00}
\definecolor{Gray}{gray}{0.9}
\definecolor{checkgreen}{rgb}{0,0.6,0}
\definecolor{phase1}{rgb}{0.008,0.655,1.000}
\definecolor{phase2}{rgb}{0.016,0.75,0.700}
\definecolor{phase3}{rgb}{0.929,0.35,0.700}
\definecolor{icsyellow}{cmyk}{0.00,0.11,0.53,0.00}
\definecolor{green1}{RGB}{9, 82, 86}
\definecolor{green2}{RGB}{8, 127, 140}
\definecolor{green3}{RGB}{6, 167, 125}
\definecolor{green4}{RGB}{79, 109, 122}
\definecolor{green5}{RGB}{192, 214, 223}
\definecolor{violet}{RGB}{26,69,131}
\definecolor{mypurple}{RGB}{150, 0, 180}
\definecolor{blackmy}{RGB}{38, 70, 83}
\definecolor{greenmy}{RGB}{42, 167, 143}
\definecolor{yellowmy}{RGB}{233, 196, 106}
\definecolor{redmy}{RGB}{253,127,111}
\definecolor{bluemy}{RGB}{40, 75, 99}
\definecolor{myred}{RGB}{192, 50, 33}
\definecolor{brownmy}{RGB}{244, 162, 97}
\definecolor{blue1}{RGB}{1, 58, 99}
\definecolor{blue2}{RGB}{70, 143, 175}
\definecolor{blue3}{RGB}{137, 194, 217}
\definecolor{red1}{RGB}{176, 71, 89}
\definecolor{red3}{RGB}{249, 155, 125}
\definecolor{red2}{RGB}{231, 97, 97}
\definecolor{green1}{RGB}{112, 151, 117}
\definecolor{green2}{RGB}{143, 185, 150}
\definecolor{green3}{RGB}{161, 204, 165}
\newcommand{\algorithmiccommentMine}[1]{\bgroup\hfill$\triangleright$~{#1}\egroup}
\newcommand\oldtext[1]{}
\newcommand\cancel[1]{}
\begin{document}

\maketitle

\begin{abstract}
    Neural network-based methods have emerged as powerful tools for solving partial differential equations (PDEs) in scientific and engineering applications, particularly when handling complex domains or incorporating empirical data. These methods leverage neural networks as basis functions to approximate PDE solutions. However, training such networks can be challenging, often resulting in limited accuracy. In this paper, we investigate the training dynamics of neural network-based PDE solvers with a focus on the impact of initialization techniques. We assess training difficulty by analyzing the eigenvalue distribution of the kernel and apply the concept of effective rank to quantify this difficulty, where a larger effective rank correlates with faster convergence of the training error.
    Building upon this, we discover through theoretical analysis and numerical experiments that two initialization techniques, partition of unity (PoU) and variance scaling (VS), enhance the effective rank, thereby accelerating the convergence of training error. Furthermore, comprehensive experiments using popular PDE-solving frameworks, such as PINN, Deep Ritz, and the operator learning framework DeepOnet, confirm that these initialization techniques consistently speed up convergence, in line with our theoretical findings.
\end{abstract}

\begin{keywords}
    neural networks, differential equations, optimization difficulty, random feature model, partition of unity, variance scaling of initialization
\end{keywords}


\section{Introduction}
    Deep learning techniques have seen extensive applications in various domains such as computer vision~\cite{cv1,cv2}, natural language processing~\cite{nlp1,nlp2}, and AI for science~\cite{ai4sci1,ai4sci2}. In recent years, there has been significant progress in leveraging neural network (NN) architectures in the fields of mathematics and engineering. One notable area of advancement is the use of neural network structures for solving partial differential equations (PDEs), particularly when dealing with complex domains and integrating empirical data. Various methodologies, such as Physics-Informed Neural Networks (PINNs)~\cite{PINNori}, Deep Ritz~\cite{DeepRitz}, Weak Adversarial Networks~\cite{wan}, and random feature methods for solving PDEs~\cite{chen2022bridging,rnn1}, have shown promising potential in solving PDEs. These advancements in utilizing neural networks for solving PDEs have opened up new avenues in scientific computing.

    The utilization of NN-based methods for solving partial differential equations (PDEs) is primarily based on approximating the solutions to PDEs using NN functions, represented as:

\begin{equation}
    u_{\vtheta}(\vx):=\phi(\vx;\vtheta) = \sum_{j=1}^M a_j \phi_j(\vx) ,~\vx\in\Omega.
\end{equation}
Here $\phi_j(\vx)$ denotes the NN functions which can be regarded as basis functions and $a_j$ are the corresponding coefficients. The symbol $\Omega$ denotes the domain over which the PDE is being solved. The universal approximation theorem and its extensions~\cite{universal3,universal4,universal1,DeepOnetN,universal5,universal7} provide theoretical possibility for NN functions to approximate the solutions to PDEs. However, in practice, training NNs to accurately approximate PDE solutions can be challenging, as gradient-based algorithms often struggle to find the corresponding solutions ~\cite{failurepinn1,failurepinn2,loss2}. Prior work~\cite{failurepinn2} has quantified and analyzed the stiffness of the gradient flow dynamics, elucidating the challenges of training PINNs via gradient descent. The authors primarily focus on quantifying the maximum eigenvalue of the Hessian of loss but do not extensively analyze the relationship between the eigenvalues and the convergence of the loss function. In their subsequent work~\cite{loss2}, they derive the neural tangent kernel (NTK) of PINNs and demonstrate that the convergence rate of the total training error of a PINN model can be analyzed based on the spectrum of its NTK at initialization. They propose using the mean of all eigenvalues of the NTK to measure the average convergence rate of the training error. However, this metric is not particularly suitable for assessing the performance of NN-based PDE solvers. Similarly, the analysis of eigenvalues in the context of comparing automatic differentiation and finite difference methods is mentioned in~\cite{chen2024automatic}, but the focus is primarily on the impact of small eigenvalues on the training error, rather than on the overall convergence behavior.

Ongoing research efforts and the development of specialized techniques aim to address these challenges and fully harness the potential of NN-based methods for accurately and efficiently solving PDEs. This involves three key aspects: initialization techniques, NN structures, and training methods. Initialization techniques involve the selection of appropriate methods for initializing network parameters~\cite{init1,init2} and defining computational domains~\cite{pou1,pou2,pou3}. The NN structures require careful consideration of selecting suitable architectures~\cite{structure1,structure2,structure3,lan2023dosnet,structure7} and effective loss functions~\cite{failurepinn2,loss2}. Training methods include identifying proper strategies~\cite{train1,train2,strategies4} and optimization methods~\cite{optimiz1,optimiz2,optimiz3} to enhance the training process and achieve optimal performance.

The recently proposed method of utilizing random feature methods for solving PDEs has demonstrated remarkable accuracy. This approach mainly involves using random feature functions $\phi_{j}(\vx)$ as basis functions and transforming the problem of solving PDEs into a problem of solving a linear system of equations to obtain the coefficients $a_j$. A NN structure with two layers referred to as random feature models~\cite{chen2022bridging,rfm1,rfm2,rfm4}, whereas a NN with multiple layers called a random neural network~\cite{rnn1,rnn2}. By employing direct matrix-based methods instead of gradient descent-based methods, this approach addresses training challenges and achieves higher accuracy in PDE solvers. However, it is important to note that this method is primarily applicable to linear PDEs and may have limitations when solving nonlinear PDEs. The method utilizes two initialization techniques, namely variance scaling (VS) of the inner parameters in $\phi_{j}(\vx)$ and partition of unity (PoU)~\cite{chen2022bridging,rfm4}. However, the article does not provide a comprehensive analysis of how these two strategies specifically impact the final accuracy of the solution.

Motivated by the aforementioned developments, our research focuses on the training perspective, specifically investigating the influence of the kernel's eigenvalue distribution on convergence and the impact of initialization techniques on training difficulty. During training, the evolution of the loss function, typically the mean square error $L(\vtheta)= \ve^{\top} \ve$, is described by:

\begin{equation}
\frac{\D  L(\vtheta)}{\D  t}=-\ve^{\top}  \mG \ve,
\end{equation}
where $\mG$ is the kernel (see more details in Eq.~\eqref{eq..GradFlowLoss} and Eq.~\eqref{eq..Gram}).
Our research is primarily dedicated to exploring the dynamics of training NNs for solving PDEs from a rigorous perspective. Specifically, we investigate the impact of the eigenvalue distribution of the kernel $\mG$ on the convergence behavior of the training error. Additionally, we analyze the efficacy of two key techniques employed during the initialization phase, namely partition of unity (PoU) and variance scaling (VS), in enhancing the accuracy of the solution from a training standpoint. By delving into these aspects, we provide valuable insights into the training dynamics of NN-based PDE solvers and offer a deeper understanding of their convergence behavior and accuracy improvement strategies. Our main contributions are summarized as follows:
\begin{itemize}
    \item We quantify the training difficulty by using the concept of \textit{effective rank}, which captures the eigenvalue distribution of the kernel $\mG$ associated with the gradient descent method. We find that higher effective rank corresponds to easier training, as indicated by faster convergence during the optimization process using gradient descent-based methods.
    \item  We analyze two initialization techniques: PoU and VS. Through numerical experiments and theoretical analyses on the random feature model, we demonstrate the impact of these two initialization techniques on effective rank.
    \item To validate our analysis, we conduct a comprehensive suite of experiments using three distinct models: PINNs, Deep Ritz, and DeepONets. Across all models, the experiments consistently demonstrate that the two initialization techniques, PoU and VS, accelerate the convergence of training error.
\end{itemize}

The paper is structured as follows. In Section 2, we provide a review of the framework for utilizing NN structures to solve PDEs, and the random feature methods focusing on two specific initialization techniques. In Section 3, we derive the training dynamics and introduce the key concept: effective rank. Subsequently, we concentrate on the random feature model (RFM) as a case study to analyze the influence of PoU and VS techniques on effective rank, and how this impacts the training process. In Section 4, we present experimental results to validate our analysis through three distinct models: PINN, Deep Ritz and DeepOnet.

\section{Preliminaries}

\subsection{Solving PDEs with NNs.}
In the context of solving partial differential equations (PDEs), we consider the problem formulated as follows:

\begin{equation}
\begin{cases}\mathcal{L} u(\vx)=f(\vx), & \vx \in \Omega, \\ \mathcal{B} {u}(\vx)={g}(\vx), & \vx\in \partial \Omega,\end{cases}
\end{equation}
where $\mathcal{L}$ is a differential operator representing the PDE, $\vx=\left(x_1, \cdots, x_d\right)^{\top}$, $\Omega$ is bounded and connected domain in $\sR^d$ and $\mathcal{B} {u}(\vx)={g}(\vx), \vx\in \partial \Omega$ stands for the boundary condition. In the context of solving PDE problems using NN structures, a commonly used architecture is as follows:

\begin{equation}
\phi(\vx;\vtheta) =\sum_{i=1}^M a_j \phi_j(\vx) = \sum_{j=1}^M a_j \sigma(\vk_j \cdot \vx + b_j) = \sum_{i=1}^M a_j \sigma(\vw_j \cdot \vx).
\label{Eq.NNstructure}
\end{equation}
 In this paper, we mainly focus on the initialization of $\vw_j \sim U(-R_m, R_m)$ indicating that $\vw_j$ is randomly selected from a uniform distribution with a range $[-R_m, R_m]$. With a little bit abuse of notation, here we augment $\vx$ to $(\vx,1)^{\top}$ (and still denote it as $\vx$) and let $\vw_j=(\vk_j,b_j)^{\top}$. In this NN structure, $\{a_j\}_{j=1}^{M}$ represents the output layer parameters of the network, and $\phi_{j}(\vx)$ denotes the basis function, which is composed of a combination of nonlinear activation functions and linear layers. Typically, we consider a simple two-layer NN as shown above $\phi_{j}(\vx) = \sigma(\vk_j \cdot \vx + b_j)$, where $\sigma$ represents the nonlinear activation function. In the NN paradigm, the parameters $\{(a_j, \vk_j, b_j)\}_{j=1}^M\subset \sR\times\sR^d\times\sR$ can be trained by minimizing the loss function. The loss function can be set in different cases, take the physics-informed neural network (PINN)~\cite{PINNori} as an example, the loss function can be defined as:

\begin{equation}
    L(\vtheta) := L_{\text{res}}(\vtheta) + \gamma L_{\text{bc}}(\vtheta).
\end{equation}

Here, $L(\vtheta)$ represents the total loss, which consists of two components: the PDEs residual term  $L_{\text{res}}(\vtheta) = \|\mathcal{L} \phi(\vx;\vtheta)-f(\vx)\|^2_2$ and the boundary condition term $L_{\text{bc}}(\vtheta) = \|\mathcal{B} \phi(\vx;\vtheta) - g(\vx)\|_2^2$. The parameter $\gamma$ serves as a regularization factor to balance the influence of these two loss terms. Specifically, the loss function has the form like:

\begin{equation}
    L(\vtheta) := L_{\text{res}}(\vtheta) + \gamma L_{\text{bc}}(\vtheta) =\sum_{i=1}^N |\mathcal{L}\phi(\vx_i;\vtheta) - f(\vx_i)|^2 + \gamma \sum_{i=1}^{N^{\prime}} |\mathcal{B}\phi(\vx_i^{\prime};\vtheta)-g(\vx_i^{\prime})|^2,
\label{PINN}
\end{equation}
where the variables
$\vx_j \in \Omega$ and $\vx^{\prime}_j \in \partial\Omega$. The points $\vx_j$ and $\vx^{\prime}_j$ can be randomly selected from within the domain $\Omega$ and its boundary $\partial \Omega$, or they can be chosen from uniform grids.
In the case of Deep Ritz method~\cite{DeepRitz}, the loss function is the corresponding variation form of the PDEs.

\subsection{Solving PDEs with random feature methods}

The random feature model (RFM) maintains the structural similarity to Eq.~\eqref{Eq.NNstructure}, with the key distinction that the inner parameters $\{\vk_j, b_j\}_{j=1}^M$ are randomly initialized and subsequently fixed, as described in ~\cite{chen2022bridging}. A common choice for the initialization distribution is the uniform distribution, where $\vk_{j} \sim U\left([-R_{m}, R_{m}]^{d}\right)$ and $b_{j} \sim U\left([-R_{m}, R_{m}]\right)$, although alternative distributions can be employed. Notably, only the outer parameters $\{a_j\}_{j=1}^M$ are subject to training and remain free to adapt during the optimization process. Therefore, the solution to problem a can be obtained through the use of direct matrix solving methods, such as the QR method~\cite{QRmethod} and least square method~\cite{leastsquare}. Another important technique utilized in the method is the PoU. To demonstrate the application of the RFM, we consider a one-dimensional case where the NN function approximating the solutions to PDEs takes the following form:

\begin{equation}
u_M(\vx)=\sum_{n=1}^{M_p} \psi_n(\vx) \sum_{j=1}^{J_n} a_{n j} \phi_{n j}(\vx) = \sum_{n=1}^{M_p} \psi_n(\vx) \sum_{j=1}^{J_n} a_{n j} \sigma(\vk_{nj} \cdot \vx + b_{nj}),
\end{equation}
where $\psi_n(\vx)$ represents the construction function and $\phi_{n j}(\vx)=\sigma(\vk_{nj} \cdot \vx + b_{nj})$ is the random feature functions and $\vk_{nj} \sim U\left([-R_{m}, R_{m}]^{d}\right)$ and $b_{nj} \sim U\left([-R_{m}, R_{m}]\right)$, $a_{nj}$ is the coefficients. $M_p$ is number of partition of unity and $J_n$ is number of bases in each unity. In one dimensional case, the construction function can be:
\begin{equation}
\psi_n^a(x)=\chi_{\{-1 \leq \tilde{x}<1\}},
\end{equation}
or
\begin{equation}
\psi_n^b(x)= \begin{cases}\frac{1+\sin (2 \pi \tilde{x})}{2}, & -\frac{5}{4} \leq \tilde{x}<-\frac{3}{4}, \\
1, & -\frac{3}{4} \leq \tilde{x}<\frac{3}{4}, \\
\frac{1-\sin (2 \pi \tilde{x})}{2}, & \frac{3}{4} \leq \tilde{x}<\frac{5}{4}, \\
0, & \text { otherwise},\end{cases}
\end{equation}
where $\chi_{\{-1 \leq \tilde{x}<1\}}$ is the characteristic function, $\tilde{x} = \frac{1}{r_n}(x - x_n)$ represents a linear transformation to the interval $[-1,1]$, and $r_n$ is a normalizing factor and $x_n$ is the center associated with the $n$-th local solution as shown in Fig.~\ref{Fig.PoU Construction function}(a). In $d$ dimension case, we directly employ $\psi_n(\vx) = \prod_{k=1}^{d}\psi_n(x_k)$ as shown in Fig.~\ref{Fig.PoU Construction function}(b). The PoU is a domain decomposition method that is utilized in the context of NN-based PDE solvers ~\cite{pou1,pou2,pou3}. However, the specific reasons why applying this strategy can lead to improved accuracy have not yet been thoroughly investigated.

\begin{figure}[!ht]
    \centering
    \setcounter {subfigure} 0(a){
    \includegraphics[scale=0.4]{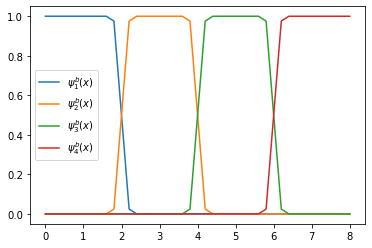}}
    \centering
    \setcounter {subfigure} 0(b){
    \includegraphics[scale=0.32]{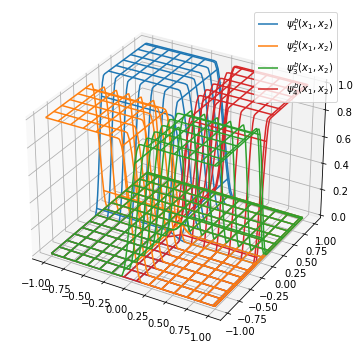}}
    \caption{Construction functions $\{\psi^{b}_n(x),n=1,\dots,M_p\}$. (a) 1D Construction functions $\psi_n^{b}(x)$ define on $[0,8]$ for dividing the region into four domains ($M_p = 4$). In this case $x_n = \{1,3,5,7\}$ with corresponding $r_n = \frac{8-0}{4} = 2$. (b) 2D Construction functions $\psi_n^{b}(x_1,x_2)$ define on $[-1,1]^2$ for dividing the region into four domains ($M_p = 4$).}
    \label{Fig.PoU Construction function}
\end{figure}

Based on this, in this paper, we analyze these two initialization techniques, PoU and VS, from a training perspective to study how they improve the convergence of the training process.

\section{Eigenvalue-based training difficulty quantification and acceleration}

In this section, we begin by analyzing the impact of the distribution of kernel eigenvalues on the convergence rate of training dynamics. We employ the effective rank~\cite{roy2007effective} as a measure of the convergence rate. Through our investigations, we discover that two initialization techniques, namely PoU and VS, can effectively increase the effective rank and, consequently, improve the convergence rate of the training process. We primarily investigate the relationship between two quantities and $\operatorname{erank}(\mG)$ one is $M_p$, the number of PoU, and the other is $R_m$, the VS of initialization.

\subsection{Effective rank in training dynamics}

Without loss of generality, we only consider the PDEs residual term $L_{\text{res}}(\vtheta)$ and linear PDEs case here, i.e.,
\begin{equation}
   L_{\text{res}}(\vtheta) = \| \mathcal{L} \phi(\vx;\vtheta)-f(\vx)\|^2_2 = \bigg\|\mathcal{L} \left(\sum_{k=1}^M a_k \sigma(\vw_k \cdot \vx) \right)-f(\vx) \bigg\|^2_2. 
\label{Eq.PINNLoss}
\end{equation}
The training dynamics based on gradient descent (GD) at the continuous limit obeys the following gradient flow of $\vtheta$,

\begin{equation}
\frac{\D \vtheta}{\D t}=-\nabla_{\vtheta} L_{\text{res}}(\vtheta) .
\end{equation}

More precisely, $\vtheta=\operatorname{vec}\left(\left\{\vq_k\right\}_{k=1}^M\right)$ with $\vq_k=\left(a_k, \vw_k^{\top}\right)^{\top}, k \in[M]$ solves
$$
\begin{aligned}
\frac{\D a_k}{\D t} & =-\sum_{i=1}^N \mathcal{L} \bigg( \frac{1}{\sqrt{M}}\sum_{k=1}^M\sigma\left(\vw_k \cdot \vx_i\right)\bigg)\left( \mathcal{L}\bigg(\frac{1}{\sqrt{M}} \sum_{k=1}^M a_k  \sigma\left(\vw_k \cdot  \vx_i\right)\bigg)-f(\vx_i)\right), \\
\frac{\D \vw_k}{\D t} & =- \sum_{i=1}^N \mathcal{L} \bigg(  \frac{1}{\sqrt{M}} \sum_{k=1}^M a_k \sigma^{\prime}\left(\vw_k \cdot  \vx_i\right) \vx_i\bigg)\left( \mathcal{L}\bigg(\frac{1}{\sqrt{M}} \sum_{k=1}^M a_k  \sigma\left(\vw_k \cdot  \vx_i\right)\bigg)-f(\vx_i)\right).
\end{aligned}
$$
Thus for the gradient flow of the loss function, we have

\begin{equation}\label{eq..GradFlowLoss}
\frac{\D L_{\text{res}}(\vtheta)}{\D t} = \nabla_{\va}L_{\text{res}}(\vtheta)\frac{\D \va}{\D t} + \nabla_{\vw}L_{\text{res}}(\vtheta)\frac{\D \vw}{\D t}=-2\ve^{\top} \mG \ve  =-2\ve^{\top} (\mG^{[\va]} + \mG^{[\vw]})  \ve ,
\end{equation}
where
\begin{equation}\label{eq..Gram}
\begin{aligned}
&\va=\operatorname{vec}\left(\left\{a_j\right\}_{j=1}^M\right), \vw=\operatorname{vec}\left(\left\{\vw_j\right\}_{j=1}^M\right), \\ 
& [\mG^{[\va]}(\vtheta)]_{ij}=\frac{1}{ M} \sum_{k=1}^M \big(\mathcal{L}\sigma\left(\vw_k \cdot \vx_i\right) \big)\big(\mathcal{L}\sigma\left(\vw_k \cdot \vx_j\right) \big), \\
& [\mG^{[\vw]}(\vtheta )]_{i j}=\frac{1}{M} \sum_{k=1}^M a_k^2 \big(\mathcal{L}\sigma^{\prime}\left(\vw_k \cdot \vx_i\right)\big) \big(\mathcal{L}\sigma^{\prime}\left(\vw_k^{\top} \vx_j\right)\big) \vx_i \cdot \vx_j, \\
& [\ve]_i = \left( \mathcal{L}\bigg(\frac{1}{\sqrt{M}} \sum_{k=1}^M a_k \cdot \sigma\left(\vw \cdot  \vx_i\right)\bigg)-f(\vx_i)\right).
\end{aligned}
\end{equation}

Note that
$L_{\text{res}}(\vtheta) = \ve^{\top}\ve.$ We then explore the eigenvalue decomposition of the matrix $\mG = \mQ^{\top}\text{diag}\{\lambda_1,\lambda_2,\dots,\lambda_N\}\mQ$, where $\lambda_1 \geq \lambda_2 \geq \dots \geq \lambda_N$. In this context, $\widetilde{\ve}_i = [\mQ \ve]_i$ represents the $i$-th column of the matrix product $\mQ \ve$. With this, we can rewrite the gradient flow as:

\begin{equation}
\begin{aligned}
\frac{\D L_{\text{res}}(\vtheta)}{\D t} &= -2\ve^{\top} \mG \ve
= -2 \sum_{i=1}^N \lambda_i \widetilde{\ve}_i^2,
\end{aligned}
\end{equation}
where $\ve_i^2 = \widetilde{\ve}_i^2$, and due to $\mQ^{\top} \mQ = \mI$, thus $\sum_{i=1}^N \widetilde{\ve}_i^2 = L_{\text{res}}(\vtheta)$. 
Intuitively, eigenvectors corresponding to larger eigenvalues converge at a faster rate, while eigenvectors associated with smaller eigenvalues converge more slowly. This phenomenon is closely related to the concept of frequency principle (also known as spectral bias), a well-known pathology that hinders deep fully-connected networks from learning high-frequency functions~\cite{spetralbias1,spetralbias2,spetralbias3,spetralbias4}. In the context of NN-based PDE solvers, the eigenvectors corresponding to higher eigenvalues of the kernel $\mG$ generally exhibit lower frequencies, aligning with the spectral bias phenomenon. Consequently, the overall convergence rate of the loss function is limited by the smallest eigenvalue. However, due to the inherent limitations of gradient descent methods, within a finite number of steps, the descent of eigenvectors corresponding to smaller eigenvalues is slower, and their contribution to the decrease in loss is limited. Furthermore, to ensure training stability in gradient descent (GD) methods, the step size $\Delta t$ in gradient descent is often constrained by the largest eigenvalue. Based on this analysis, we conclude that the convergence rate of the training error is closely linked to the distribution of eigenvalues. In this context, the effective rank~\cite{roy2007effective} provides a more comprehensive measure, capturing the influence of both large and small eigenvalues on the convergence behavior.

\begin{definition}
[\textbf{Effective rank}]
For a matrix $\mA\in\sR^{m\times n}$ $(m>n)$ with singular values $\sigma_1\ge\sigma_2\ge\ldots\ge\sigma_{n}$, the truncation entropy of $\mA$ is defined as:
\begin{equation}
    \operatorname{erank}(\mA) = \exp \left\{-\sum_{k=1}^{n} p_k \log p_k,\right\},
\end{equation}
where $p_k=\frac{\sigma_k}{\|\vsigma\|_1}$, for $k=1,2,\ldots, n$, with $\vsigma = [\sigma_1,\sigma_2,\dots,\sigma_n]^{\top}$.
\end{definition}

Here, the concept of effective rank, as proposed in~\cite{roy2007effective}, aims to provide better continuity compared to the definition of rank, enabling minimization in certain signal processing applications. In this paper, we utilize the effective rank to assess the distribution of eigenvalues of the kernel $\mG$. Within this definition, the quantity $-\sum_{k=1}^{n} p_k \log p_k$ represents entropy. A larger effective rank corresponds to the values of $p_k$ being closer to each other. As a result, the matrix $\mA$ becomes closer to a scalar matrix.

To provide a more intuitive illustration of the relationship between effective rank and convergence rate, we consider a simple problem for solving a linear system $\mA\vx = \vb$, where $\mA$ is a diagonal matrix given by $\text{diag}\{\lambda_1,\lambda_2,\dots,\lambda_N\}$. In this case, we set $\lambda_1 = 256$ and $\lambda_N = 1$, which means that different $\mA$ share the same largest eigenvalue and condition number but different effective rank. The vector $\vb$ is randomly chosen and then remains the same for different $\mA$. We utilize gradient descent to minimize the mean squared error (MSE) loss between $\mA\vx$ and $\vb$ over 100 epochs, with a learning rate of $\texttt{5e-2}$. In Fig.~\ref{Fig.Kernel_eigen}(a), we present the results for different eigenvalue distributions, highlighting their corresponding effective ranks. The graph demonstrates that a larger effective rank is achieved when the eigenvalue distribution is relatively uniform.  Fig.~\ref{Fig.Kernel_eigen}(b) illustrates the convergence of each dimension $x_i$, where $i=1,2,\dots,N$. The results depicted in the loss curve of Fig.~\ref{Fig.Kernel_loss} indicate that a larger effective rank results in faster convergence of the loss.

\begin{figure}[!ht]
    \centering
    \setcounter {subfigure} 0(a.1){
    \includegraphics[scale=0.48]{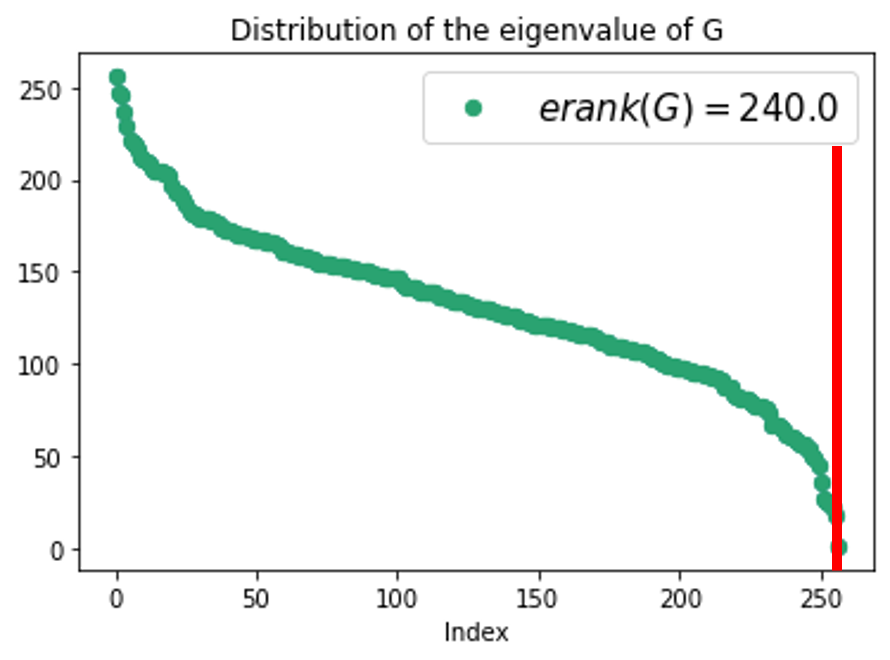}}
    \setcounter {subfigure} 0(a.2){
    \includegraphics[scale=0.48]{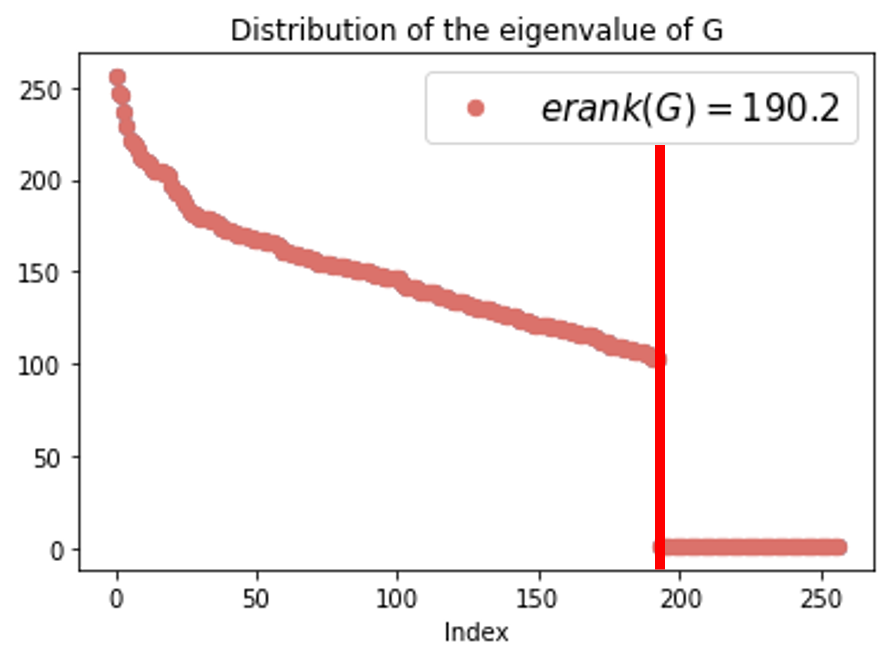}}
    \setcounter {subfigure} 0(a.3){
    \includegraphics[scale=0.48]{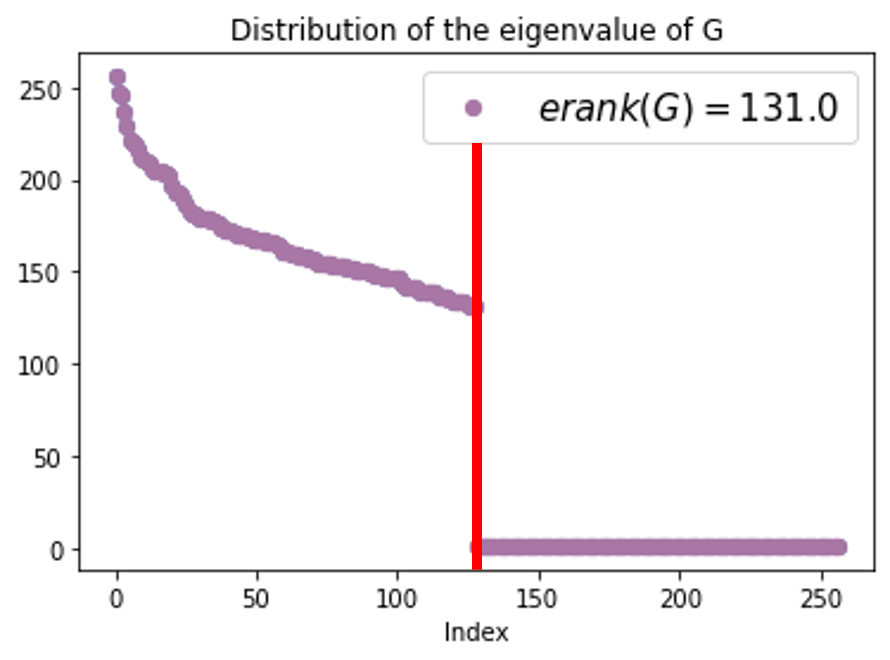}}
    \setcounter {subfigure} 0(b.1){
    \includegraphics[scale=0.48]{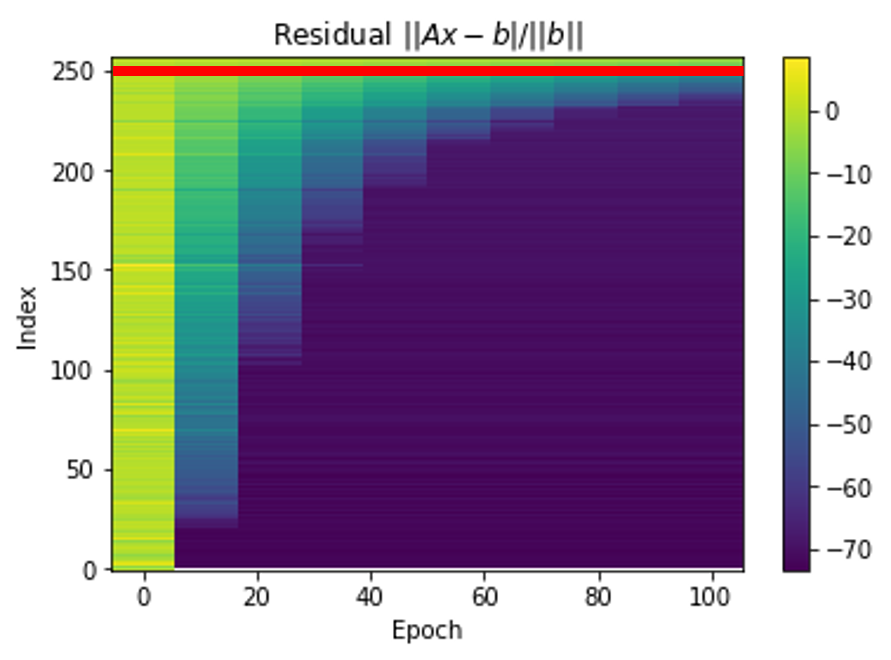}}
    \setcounter {subfigure} 0(b.2){
    \includegraphics[scale=0.48]{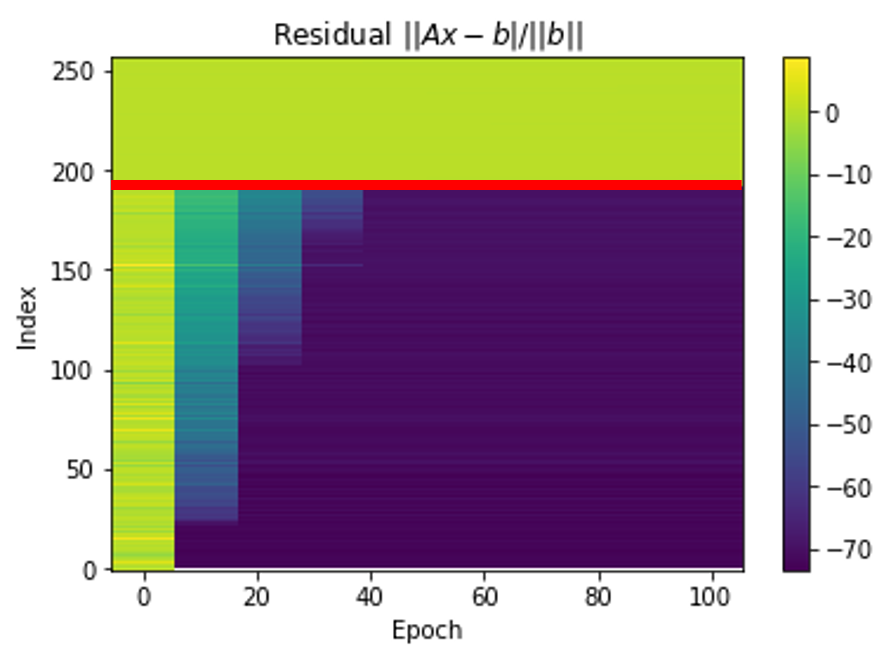}}
    \setcounter {subfigure} 0(b.2){
    \includegraphics[scale=0.48]{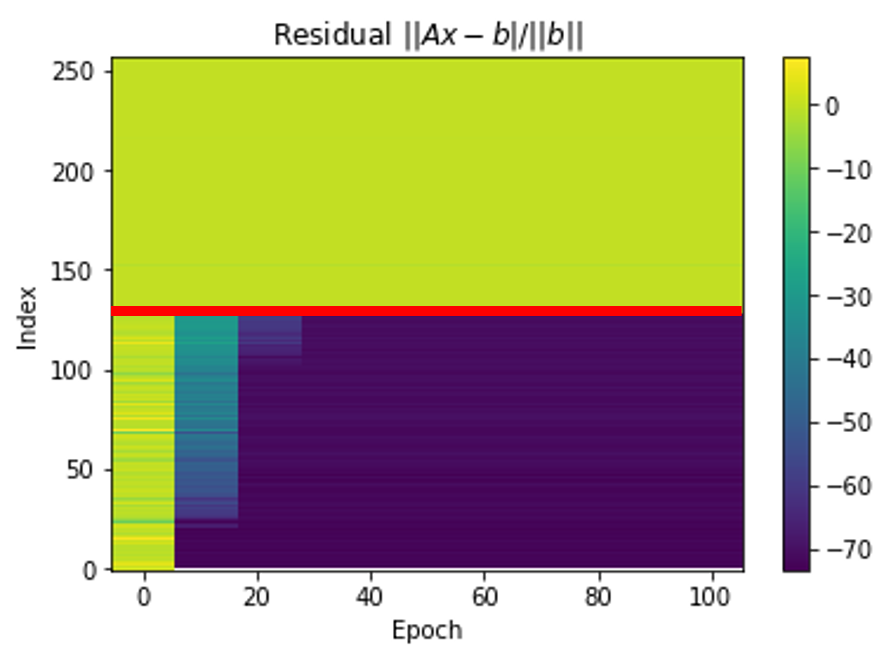}}
    \caption{ Results for the solving $\mA\vx = \vb$, where $\mA = \text{diag}\{\lambda_1,\lambda_2,\dots,\lambda_N\}$.  \textbf{(a)}: Eigenvalue distribution of matrix A along with the corresponding effective rank. \textbf{(b)}: Convergence curves for each dimension $x_i$, $i=1,2\dots N$. The red line indicates that the $x_i$ corresponding to the eigenvalues before the line have converged after 100 epochs.}
    \label{Fig.Kernel_eigen}
\end{figure}

\begin{figure}[!ht]
		\centering
		\includegraphics[width= 0.45\textwidth]{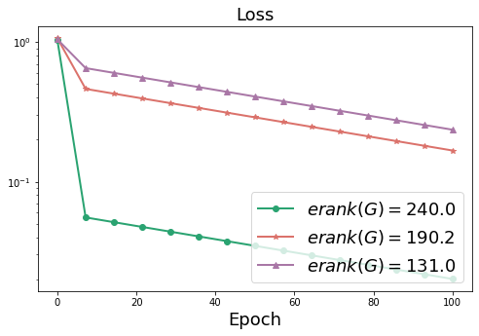}
\caption{Loss curve for solving $\mA\vx =\vb$ with mean square loss and the effective rank corresponding to different cases.}
\label{Fig.Kernel_loss}
\end{figure}

The above discussion and empirical study show that the effective rank reflect the convergence rate of the loss function. Specifically, a larger effective rank indicates faster convergence. Therefore, it is crucial to explore methods to improve the effective rank. 

\subsection{Initialization techniques to enhance effective rank}

In the following discussion, we focus on the impact of initialization techniques on the effective rank. We demonstrate that two specific techniques, PoU and VS, as described in~\cite{chen2022bridging}, can significantly improve the effective rank. This finding is supported by both experimental evidence and theoretical analysis. Notably, we employ the random feature method~\cite{chen2022bridging} as an exemplar case due to its shallower network structure, where the non-linear layers remain fixed, resulting in a corresponding fixed kernel denoted as $\mG = \mG^{[\va]}$. While our results can be extended to deep NN structures~\cite{rnn1,rnn2}, we focus on a two-layer fully connected NN structure for illustrative purposes.

\subsubsection{Partition of unity (PoU)}

Partition of unity (PoU) is a domain decomposition technique that can be implemented by multiplying a construction function with the NN function. The modified NN function can be expressed as follows:

\begin{equation}
\phi(\vx;\vtheta)  =\sum_{n=1}^{M_p} \psi_n(\vx) \sum_{j=1}^{J_n} \phi_{n j}(\vx) = \sum_{n=1}^{M_p} \psi_n(\vx) \sum_{j=1}^{J_n}a_{n j} \sigma(\vk_{nj} \cdot \vx + b_{nj}), 
\end{equation}
where $\psi_n(\vx)$ is called the construction function and $M_p$ stands for the number of partition of unity.  Fig.~\ref{Fig.basis_Mp} depicts the modified NN basis functions $\{\psi_n(x)\sigma(\vw_{nj}\cdot \vx)\}$ for different values of $M_p$. As depicted in the figure, as $M_p$ increases, the modified NN basis functions become more localized meaning they have local support.

\begin{figure}[!ht]
    \centering
    \setcounter {subfigure} 0(a){
    \includegraphics[scale=0.16]{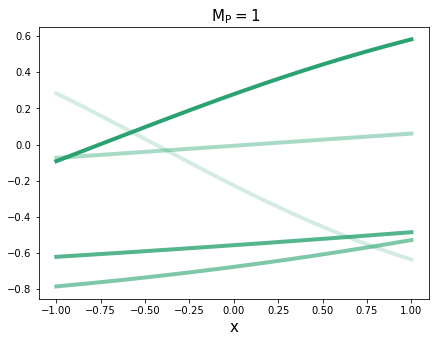}}
    \setcounter {subfigure} 0(b){
    \includegraphics[scale=0.16]{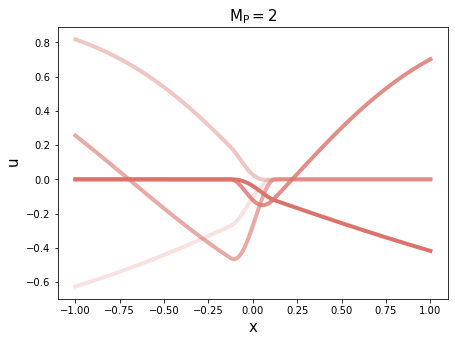}}
    \setcounter {subfigure} 0(c){
    \includegraphics[scale=0.16]{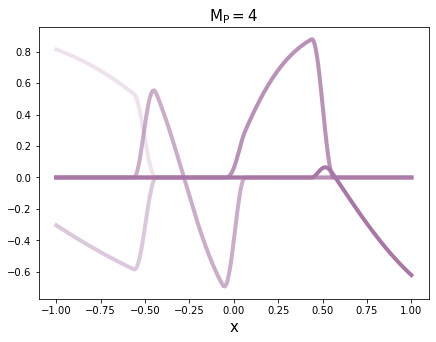}}
    \setcounter {subfigure} 0(d){
    \includegraphics[scale=0.16]{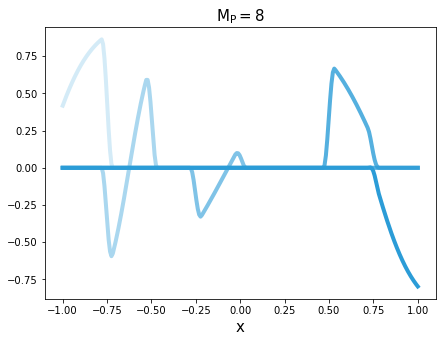}}
    \caption{Basis functions defined on $[-1,1]$, $\{\psi_n(x)\sigma(\vw_{nj}\cdot \vx), n=1,2\dots M_p,j=1,2,\dots,J_n\}$ for different numbers of PoU $M_p = \{1,2,4,8\}$. In each panel (with a fixed $M_p$), five basis functions are plotted with different shades of color.}
    \label{Fig.basis_Mp}
\end{figure}

To analyze this, we start with a two-layer random feature functions with $M$ hidden neurons problem, i.e.,

\begin{equation}\label{eq::NeuProb}
        \phi(x;\vtheta) = \frac{1}{\sqrt{M}}\sum_{k=1}^{M}a_k\sigma(w_k x+b_k),
\end{equation}
defined on a one-dimension region $I := [-1,1]$. Activation function $\sigma$ can be chosen as $\tanh(x)$ or $\sin(x)$, with input $\{x_i\}_{i=1}^N$  equally distributed selected from $I$ and $w_k, b_k\sim U[-1,1]$. For simplicity of notation, define $\vw_k=(w_k,b_k)^{\top}$, $\vx=(x,1)^{\top}$ and $\vx_i=(x_i,1)^{\top}$. Hence Eq.~\eqref{eq::NeuProb} is equivalent to
 \begin{equation}\label{eq::NeuProbSimplify}
     \phi(\vx;\vtheta) = \frac{1}{\sqrt{M}}\sum_{k=1}^{M}a_k\sigma(\vw_k\cdot\vx).
 \end{equation}
 
For simplicity, here we take the construction function $\psi_a(x)$ and $M_p = 2$ as an example. In this way the domain $I$ can be divided into two parts $I^\mathrm{L}:=[-1,0)$ and $I^\mathrm{R}:=[0,1]$ hence $I=I^\mathrm{L}\cup I^\mathrm{R}$ and center point of $I^\mathrm{L}$ and $I^\mathrm{R}$ is $x^\mathrm{L}$ and $x^\mathrm{R}$ respectively. Moreover, define $\vx^\mathrm{L}=(x^\mathrm{L},\frac{1}{2})$ and $\vx^\mathrm{R}=(x^\mathrm{R},\frac{1}{2})$. 

\begin{equation}
        \begin{aligned}
            \phi_{\mathrm{PoU}}(\vy,\vtheta) &= \frac{1}{\sqrt{M}}\sum_{n=1}^{2} \psi_n^{a}(\vy) \sum_{k=1}^{M/2}  a_{nk}\sigma(\vw_k \cdot \tilde{\vy})
            \\&= \frac{1}{\sqrt{M}}\left(\sum_{k=1}^{M/2}a^{\mathrm{L}}_k\sigma\left(\vw^{\mathrm{L}}_k\cdot 2(\vy-\vx^\mathrm{L})\right)\chi_{I^\mathrm{L}}+\sum_{k=1}^{M/2}a^\mathrm{R}_k\sigma\left(\vw^\mathrm{R}_k\cdot 2(\vy-\vx^\mathrm{R})\right)\chi_{I^\mathrm{R}}\right)
            \\ &= \frac{1}{\sqrt{M}}\left(\sum_{k=1}^{M/2}a^{\mathrm{L}}_k\sigma\left(\vw^{\mathrm{L}}_k \cdot\vy^{\mathrm{L}}\right)\chi_{I^\mathrm{L}}+\sum_{k=1}^{M/2}a^\mathrm{R}_k\sigma\left(\vw^\mathrm{R}_k\cdot \vy^\mathrm{R}\right)\chi_{I^\mathrm{R}}\right) ,
        \end{aligned}
\end{equation}
    where $\chi_{I^\mathrm{L}}$, $\chi_{I^\mathrm{R}}$ are characteristic functions, $\vw^{\mathrm{L}},\vw^{\mathrm{R}}\sim U([-1,1]^2),$ $\vy^{\mathrm{L}}:=2(\vx-\vx^\mathrm{L})$, $\vy^\mathrm{R}:=2(\vx-\vx^\mathrm{R})$, $\vy^{\mathrm{L}}, \vy^{\mathrm{R}}\in ([-1,1],1)^{\top}$, and $\{a_{\mathrm{PoU}}\}_{k=1}^{M}:=\{a_k^{\mathrm{L}}\}_{k=1}^{M/2}\cup \{a_k^\mathrm{R}\}_{k=1}^{M/2}$.

We consider the basis matrix defined as
    \begin{equation}
        \Phi_{ik}=\frac{1}{\sqrt{M}}\sigma(\vw_k\cdot \vx_i) \quad \text{and}\quad  [\mPhi\mPhi^{\top}]_{ij}=\frac{1}{M}\sum_{k=1}^{M}\sigma(\vw_k\cdot \vx_i)\sigma(\vw_k\cdot \vx_j),
    \end{equation} 
for the PoU strategies, we can see the following matrix can be decomposed by two parts $\mPhi_{\mathrm{PoU}}=\mPhi_{\mathrm{PoU}}^\mathrm{L}\oplus \mPhi_{\mathrm{PoU}}^\mathrm{R}$, $\mPhi_{\mathrm{PoU}}^\mathrm{L}, \mPhi_{\mathrm{PoU}}^\mathrm{R}\in \sR^{N/2\times M/2}$, where

\begin{equation}
\begin{aligned}   
[\mPhi_{\mathrm{PoU}}^\mathrm{L}{\mPhi_{\mathrm{PoU}}^\mathrm{L}}^{\top}]_{ij}&=\frac{1}{M}\sum_{k=1}^{M/2}\sigma(\vw_k^{\mathrm{L}}\cdot \vy^{\mathrm{L}}_i)\sigma(\vw_k^{\mathrm{L}}\cdot\vy^{\mathrm{L}}_j), 
\\ [\mPhi_{\mathrm{PoU}}^\mathrm{R}{\mPhi_{\mathrm{PoU}}^\mathrm{R}}^{\top}]_{ij}&=\frac{1}{M}\sum_{k=1}^{M/2}\sigma(\vw_k^\mathrm{R}\cdot\vy^\mathrm{R}_i)\sigma(\vw_k^\mathrm{R}\cdot \vy^\mathrm{R}_j).
\end{aligned}
\label{Eq.Phi with PoU}
\end{equation}

Here $y_j^{\mathrm{L}}=y_j^\mathrm{R}$. We present our first main result, which demonstrates that following the partition of unity, the decomposed matrices $\mPhi_{\mathrm{PoU}}^\mathrm{L}$ and $\mPhi_{\mathrm{PoU}}^\mathrm{R}$ exhibit similar eigenvalues.
\begin{theorem}[Similar eigenvalue distribution]
         Let $\lambda_j^\mathrm{L}$ and $\lambda_j^\mathrm{R}$ be the eigenvalues of $\mPhi_{\mathrm{PoU}}^\mathrm{L}{\mPhi_{\mathrm{PoU}}^\mathrm{L}}^{\top}$   and $\mPhi_{\mathrm{PoU}}^\mathrm{R}{\mPhi_{\mathrm{PoU}}^\mathrm{R}}^{\top}$, respectively. With probability with probability $1-N^2 \E^{-M}$ over the choice of $\{w_k^{\mathrm{L}}, w_k^\mathrm{R}\}_{i=1}^{M/2}$, we have
        \begin{equation*}
            \left(\sum_{j=1}^N\left|\lambda_j^\mathrm{L}-\lambda_j^\mathrm{R}\right|^2\right)^{1 / 2}\le \sqrt{2}\frac{N}{\sqrt{M}}.
        \end{equation*}
\end{theorem}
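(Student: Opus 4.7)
The plan is to reduce the eigenvalue comparison to a bound on the Frobenius distance between the two Gram matrices via the Hoffman--Wielandt inequality, and then to control that distance entry-wise by concentration of measure. The key structural observation is that because $y_j^{\mathrm{L}}=y_j^{\mathrm{R}}$ and the weights $\vw_k^{\mathrm{L}},\vw_k^{\mathrm{R}}$ are i.i.d.\ from the same uniform distribution, corresponding entries of the two Gram matrices $\mA := \mPhi_{\mathrm{PoU}}^{\mathrm{L}}(\mPhi_{\mathrm{PoU}}^{\mathrm{L}})^{\top}$ and $\mB := \mPhi_{\mathrm{PoU}}^{\mathrm{R}}(\mPhi_{\mathrm{PoU}}^{\mathrm{R}})^{\top}$ are empirical averages of the \emph{same} underlying kernel, so their difference is a centered quantity that should concentrate at rate $1/\sqrt{M}$.

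First, I would apply the Hoffman--Wielandt inequality to the pair $(\mA,\mB)$, which are both symmetric positive semidefinite, to obtain
\begin{equation*}
\sum_{j}\bigl|\lambda_j^{\mathrm{L}}-\lambda_j^{\mathrm{R}}\bigr|^{2} \;\le\; \|\mA-\mB\|_F^{2}.
\end{equation*}
This reduces the theorem to bounding the Frobenius norm of the matrix difference. Using the formulas in \eqref{Eq.Phi with PoU}, each entry can be written as
\begin{equation*}
[\mA-\mB]_{ij}=\frac{1}{M}\sum_{k=1}^{M/2}\Bigl[\sigma(\vw_k^{\mathrm{L}}\cdot\vy_i^{\mathrm{L}})\sigma(\vw_k^{\mathrm{L}}\cdot\vy_j^{\mathrm{L}})-\sigma(\vw_k^{\mathrm{R}}\cdot\vy_i^{\mathrm{R}})\sigma(\vw_k^{\mathrm{R}}\cdot\vy_j^{\mathrm{R}})\Bigr],
\end{equation*}
a rescaled sum of $M/2$ i.i.d.\ zero-mean random variables, each bounded in magnitude by $2$ since $|\sigma|\le 1$ for $\tanh$ and $\sin$.

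Next, I would apply a Hoeffding-type tail bound to each fixed pair $(i,j)$ to conclude that $|[\mA-\mB]_{ij}|^{2}\le 2/M$ with probability at least $1-\E^{-M}$. A union bound over the at most $N^{2}$ entries then guarantees that all entries satisfy this bound simultaneously with probability at least $1-N^{2}\E^{-M}$. On this event, $\|\mA-\mB\|_F^{2}\le 2N^{2}/M$, and taking the square root in the Hoffman--Wielandt bound produces the claimed $\sqrt{2}\,N/\sqrt{M}$.

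The main obstacle will be the concentration step: pairing the stated probability $1-N^{2}\E^{-M}$ with the deviation $\sqrt{2/M}$ per entry is essentially tight, so a naive application of Hoeffding using only the coarse bound $|\sigma|\le 1$ may need sharpening, for instance by a Bernstein-type refinement that exploits the small variance of $\sigma(\vw\cdot\vy_i)\sigma(\vw\cdot\vy_j)$ or by a careful accounting of the ratio between the number of summands $M/2$ and the $1/M$ normalization. By contrast, the Hoffman--Wielandt reduction is standard once symmetry and positive semidefiniteness of both Gram matrices are noted, and the independence of the two weight ensembles only helps (it makes the difference a sum of independent increments rather than requiring an empirical-versus-population comparison).
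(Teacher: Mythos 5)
Your route is essentially the paper's own: reduce the eigenvalue comparison to the Frobenius distance between the two Gram matrices (you via Hoffman--Wielandt, the paper via Kahan's spectral-variation bound, which is where its factor $\sqrt{2}$ comes from -- for symmetric PSD matrices your version needs no extra constant), then control each entry of $\mA-\mB$ by concentration and a union bound over at most $N^2$ entries. The reduction step and the structural observation that $\vy_j^{\mathrm{L}}=\vy_j^{\mathrm{R}}$ makes each entry difference a centered sum are both correct and match the paper, which uses McDiarmid per entry where you use Hoeffding (equivalent here).

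The gap is exactly the one you flagged, and it cannot be repaired at the stated deviation/probability pair. Each entry is $\frac{1}{M}\sum_{k=1}^{M/2}Z_k$ with $Z_k$ independent, mean zero, $|Z_k|\le 2$, so Hoeffding gives $\Pr\left(|[\mA-\mB]_{ij}|\ge\varepsilon\right)\le 2\exp(-M\varepsilon^2/4)$; at $\varepsilon=\sqrt{2/M}$ this is $2\exp(-1/2)$, a constant, not $\E^{-M}$. A Bernstein refinement does not help: $\operatorname{Var}(Z_k)$ is of order one for $\tanh$ or $\sin$ with $\vw\sim U([-1,1]^2)$ and inputs in $[-1,1]\times\{1\}$ (it does not shrink with $M$), and by the central limit theorem a deviation at the one-standard-deviation scale $1/\sqrt{M}$ occurs with probability bounded below by a constant, so failure probability $\E^{-M}$ at that scale is unattainable by any argument. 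The honest trade-off is $\varepsilon=\sqrt{t/M}$ per entry with probability $1-O(N^2\E^{-t})$, yielding a bound of order $\sqrt{t}\,N/\sqrt{M}$ on $\left(\sum_j|\lambda_j^{\mathrm{L}}-\lambda_j^{\mathrm{R}}|^2\right)^{1/2}$. You should also know that the paper's proof has the same defect: it invokes McDiarmid with failure probability $2\E^{-\varepsilon^2M^2}$, but with bounded differences $c_k=2/M$ over $M/2$ coordinates the exponent is $2\varepsilon^2/\sum_k c_k^2=\varepsilon^2 M$, so its choice $\varepsilon=1/\sqrt{M}$ likewise gives only constant confidence; the theorem as stated should carry either a weaker probability or a larger deviation. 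So your plan is the same as the paper's, your execution is no worse, and the obstacle you identified is a real flaw in the claimed quantitative statement rather than a missing trick on your side.
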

    
\begin{proof}
    Define 
    \begin{equation*}
        \begin{aligned}
            f(\vw_1^{\mathrm{L}},\ldots,\vw_{M/2}^{\mathrm{L}})&=\frac{1}{M}\sum_{k=1}^{M/2}\sigma(\vw_k^{\mathrm{L}}\cdot \vy^{\mathrm{L}}_i)\sigma(\vw_k^{\mathrm{L}}\cdot \vy^{\mathrm{L}}_j)\\
            g(\vw_1^{\mathrm{R}},\ldots,\vw_{M/2}^{\mathrm{R}})&=\frac{1}{M}\sum_{k=1}^{M/2}\sigma(\vw_k^{\mathrm{R}}\cdot \vy^{\mathrm{R}}_i)\sigma(\vw_k^{\mathrm{R}}\cdot \vy^{\mathrm{R}}_j)
        \end{aligned}
    \end{equation*}
    By the McDiarmid's inequality, for any $\varepsilon>0$, at least $1-2 \E^{-\varepsilon^2M^2}$ over the choice of $\{\vw_k^{\mathrm{L}}\}_{i=1}^{M/2}$ and $\{\vw_k^\mathrm{R}\}_{i=1}^{M/2}$, we have 
\begin{equation*}
\begin{aligned}
            \bigg|{f(\vw_1^{\mathrm{L}},\ldots,\vw_{M/2}^{\mathrm{L}})-\mathbb{E}[f(\vw_1^{\mathrm{L}},\ldots,\vw_{M/2}^{\mathrm{L}})]}\bigg|&\le \varepsilon ,
            \\
            \bigg|{g(\vw_1^{\mathrm{R}},\ldots,\vw_{M/2}^{\mathrm{R}})-\mathbb{E}[g(\vw_1^{\mathrm{R}},\ldots,\vw_{M/2}^{\mathrm{R}})]}\bigg|&\le \varepsilon,
\end{aligned}
\end{equation*}
where the expectation refers to the expectation with respect to $\vw$. By choosing $\varepsilon=\frac{1}{\sqrt{M}}$, and that $\vy_j^{\mathrm{L}}=\vy_j^\mathrm{R}$ for each $j\in \{1,2,\ldots,N/2\}$ hence $\mathbb{E}[f(\vw_1^{\mathrm{L}},\ldots,\vw_{M/2}^{\mathrm{L}})]=\mathbb{E}[g(\vw_1^{\mathrm{R}},\ldots,\vw_{M/2}^{\mathrm{R}})]$ , with probability $1-4 \E^{-M}$ over the choice of $\{\vw_k^{\mathrm{L}}, \vw_k^\mathrm{R}\}_{i=1}^{M}$, we have
\begin{equation}
        \Bigg|\frac{1}{M}\sum_{k=1}^{M/2}\sigma(\vw_k^{\mathrm{L}}\cdot \vy^\mathrm{L}_i)\sigma(\vw_k^{\mathrm{L}}\cdot \vy^{\mathrm{L}}_j)-\frac{1}{M}\sum_{k=1}^{M/2}\sigma(\vw_k^\mathrm{R}\cdot \vy^\mathrm{R}_i)\sigma(\vw_k^\mathrm{R}\cdot \vy^\mathrm{R}_j)\Bigg |\le \frac{2}{\sqrt{M}}.
\end{equation}
This and $\mPhi_{\mathrm{PoU}}^\mathrm{L}{\mPhi_{\mathrm{PoU}}^\mathrm{L}}^{\top}, \mPhi_{\mathrm{PoU}}^\mathrm{R}{\mPhi_{\mathrm{PoU}}^\mathrm{R}}^{\top}\in \sR^{\frac{N}{2}\times \frac{N}{2}}$ together lead to with probability $1-N^2 \E^{-M}$ over the choice of $\{\vw_k^{\mathrm{L}}, \vw_k^\mathrm{R}\}_{i=1}^{M/2}$, we have 
\begin{equation}
        \Big \| \mPhi_{\mathrm{PoU}}^\mathrm{L}{\mPhi_{\mathrm{PoU}}^\mathrm{L}}^{\top}-\mPhi_{\mathrm{PoU}}^\mathrm{R}{\mPhi_{\mathrm{PoU}}^\mathrm{R}}^{\top}\Big \| _\text{F}\le \frac{N}{\sqrt{M} }.
\end{equation}
Hence based on the results in \S0 and \S3 of ~\cite{Kahan1975SpectraON},
\begin{equation}
\left(\sum_{j=1}^N\left|\lambda_j^\mathrm{L}-\lambda_j^\mathrm{R}\right|^2\right)^{1 / 2} \leq \sqrt{2}\Big \|\mPhi_{\mathrm{PoU}}^\mathrm{L}{\mPhi_{\mathrm{PoU}}^\mathrm{L}}^{\top}-\mPhi_{\mathrm{PoU}}^\mathrm{R}{\mPhi_{\mathrm{PoU}}^\mathrm{R}}^{\top}\Big \|_\text{F} \le \sqrt{2}\frac{N}{\sqrt{M}},
\end{equation}
where $\lambda_j^\mathrm{L}$ and $\lambda_j^\mathrm{R}$ are eigenvalues of $\mPhi_{\mathrm{PoU}}^\mathrm{L}{\mPhi_{\mathrm{PoU}}^\mathrm{L}}^{\top}$   and $\mPhi_{\mathrm{PoU}}^\mathrm{R}{\mPhi_{\mathrm{PoU}}^\mathrm{R}}^{\top}$.
And this completes the proof.
\end{proof}

In light of the aforementioned theorem, it is elucidated that upon the completion of the partition of unity, the eigenvalues of $\mPhi_{\mathrm{PoU}}^\mathrm{L}{\mPhi_{\mathrm{PoU}}^\mathrm{L}}^{\top}$   and $\mPhi_{\mathrm{PoU}}^\mathrm{R}{\mPhi_{\mathrm{PoU}}^\mathrm{R}}^{\top}$ exhibit proximity, thereby leading to the following proposition.

\begin{corollary}[Similar effective rank]
 Suppose that the partition of unity is used. The matrices $\mPhi_{\mathrm{PoU}}^\mathrm{L}{\mPhi_{\mathrm{PoU}}^\mathrm{L}}^{\top}$   and $\mPhi_{\mathrm{PoU}}^\mathrm{R}{\mPhi_{\mathrm{PoU}}^\mathrm{R}}^{\top}$ have similar effective rank: 
\begin{equation}
\begin{aligned}
      & 
      & \operatorname{erank}(\mPhi_{\mathrm{PoU}}^\mathrm{L}{\mPhi_{\mathrm{PoU}}^\mathrm{L}}^{\top}) \approx  \operatorname{erank}(\mPhi_{\mathrm{PoU}}^\mathrm{R}{\mPhi_{\mathrm{PoU}}^\mathrm{R}}^{\top}). 
    \end{aligned}
\end{equation}
\end{corollary}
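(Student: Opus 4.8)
The plan is to derive the corollary as a stability statement: the effective rank is a continuous function of the singular value (equivalently, eigenvalue) vector of a symmetric positive semidefinite matrix, and the preceding theorem already tells us that the eigenvalue vectors $\vlambda^\mathrm{L}=(\lambda_1^\mathrm{L},\ldots,\lambda_N^\mathrm{L})$ and $\vlambda^\mathrm{R}=(\lambda_1^\mathrm{R},\ldots,\lambda_N^\mathrm{R})$ (sorted decreasingly) are close in $\ell^2$, namely $\|\vlambda^\mathrm{L}-\vlambda^\mathrm{R}\|_2\le\sqrt{2}\,N/\sqrt{M}$, with probability $1-N^2\E^{-M}$. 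So the first step is to record that both matrices are symmetric PSD, so their singular values coincide with their (nonnegative) eigenvalues, and to write $\operatorname{erank}$ purely as a function $E(\vsigma)=\exp\{-\sum_k p_k\log p_k\}$ of the normalized vector $p_k=\sigma_k/\|\vsigma\|_1$.

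Next I would establish the analytic continuity of $E$ on the relevant domain. The map $\vsigma\mapsto \vp=\vsigma/\|\vsigma\|_1$ is locally Lipschitz away from $\vsigma=\vzero$ (and $\|\vsigma\|_1=\operatorname{tr}$ is bounded below here by a positive constant, since the Gram matrix is not identically zero for generic data), and the Shannon entropy $H(\vp)=-\sum_k p_k\log p_k$ is uniformly continuous on the probability simplex, hence $E=\exp\circ H$ is continuous. Concretely one can bound $|H(\vp^\mathrm{L})-H(\vp^\mathrm{R})|$ using the standard estimate $|H(\vp)-H(\vq)|\le \|\vp-\vq\|_1\log(N) + $ (a binary-entropy term in $\|\vp-\vq\|_1$), combined with $\|\vp^\mathrm{L}-\vp^\mathrm{R}\|_1 \le C\,\|\vlambda^\mathrm{L}-\vlambda^\mathrm{R}\|_1/\|\vlambda^\mathrm{L}\|_1 \le C'\sqrt{N}\,\|\vlambda^\mathrm{L}-\vlambda^\mathrm{R}\|_2$. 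Plugging in the theorem's bound gives $|H(\vp^\mathrm{L})-H(\vp^\mathrm{R})| = O\!\big(N^{3/2}\log(N)/\sqrt{M}\big)$, and then $|\operatorname{erank}(\cdot^\mathrm{L})-\operatorname{erank}(\cdot^\mathrm{R})|\le e^{H_{\max}}|H^\mathrm{L}-H^\mathrm{R}|$ since $E$ is $e^{\log N}=N$-Lipschitz in $H$ on $[0,\log N]$. Thus the two effective ranks differ by $O(N^{5/2}\log(N)/\sqrt{M})$, which is $o(1)$ in the regime $M\gg N^5\log^2 N$; this is the precise meaning of the ``$\approx$'' in the statement. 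I would present this quantitatively in a display rather than leaving it as a vague approximation.

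The main obstacle is honesty about what ``$\approx$'' can mean: the bound from Theorem~1 is on the $\ell^2$ distance between the \emph{full unnormalized} eigenvalue vectors, and entropy is sensitive to the small eigenvalues after normalization, so one must be careful that $\|\vlambda^\mathrm{L}\|_1$ is bounded away from $0$ uniformly (otherwise normalization amplifies the error). I would handle this by invoking that the trace equals $\frac{1}{M}\sum_k\sum_i\sigma(\vw_k\cdot\vy_i^\mathrm{L})^2$, which concentrates around a positive constant $c_0=\mathbb{E}[\sigma(\vw\cdot\vy)^2]\,\frac{N}{2}>0$ by the same McDiarmid argument used in the theorem, so with high probability $\|\vlambda^\mathrm{L}\|_1,\|\vlambda^\mathrm{R}\|_1 \ge c_0 N/4$. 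The remaining steps — the entropy perturbation inequality and the Lipschitz bound for $\exp$ — are routine, so I would state them as lemmas or cite them, and the corollary follows by chaining the inequalities on the same high-probability event (intersecting the event of Theorem~1 with the trace-concentration event, still of probability $1-O(N^2)\E^{-cM}$).
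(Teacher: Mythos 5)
The paper gives no proof of this corollary — it simply asserts that it ``follows'' from the theorem on eigenvalue proximity. Your proposal correctly identifies that the missing link is a stability estimate for $\operatorname{erank}$ viewed as a function of the (nonnegative) eigenvalue vector, and your chain — normalize to the simplex, apply a Fannes-type entropy-continuity bound, then use the Lipschitz constant of $\exp$ on $[0,\log N]$ — is exactly the right way to make the step rigorous; you also correctly spot that one must lower-bound the trace to control the normalization, which the paper glosses over. The only flaw is a bookkeeping slip in the exponent: since you establish $\|\vlambda^\mathrm{L}\|_1 = \Theta(N)$, the factor $1/\|\vlambda^\mathrm{L}\|_1$ hidden in your constant $C'$ is $O(1/N)$, so $\|\vp^\mathrm{L}-\vp^\mathrm{R}\|_1 = O(\sqrt{N/M})$ rather than $O(N^{3/2}/\sqrt{M})$; the correct final estimate is then $\big|\operatorname{erank}(\mPhi_{\mathrm{PoU}}^\mathrm{L}{\mPhi_{\mathrm{PoU}}^\mathrm{L}}^{\top})-\operatorname{erank}(\mPhi_{\mathrm{PoU}}^\mathrm{R}{\mPhi_{\mathrm{PoU}}^\mathrm{R}}^{\top})\big| = O\big(N^{3/2}\log N/\sqrt{M}\big)$, not $O(N^{5/2}\log N/\sqrt{M})$, which correspondingly relaxes the regime to $M\gg N^{3}\log^{2}N$. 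With that arithmetic fixed, your argument is a correct and genuinely more informative version of what the paper leaves implicit.
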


Here, we notice that without partition of unity we can get
\begin{equation}
[\mPhi\mPhi^{\top}]_{ij}=\frac{1}{M}\sum_{k=1}^{M}\sigma(\vw_k\cdot \vx_i)\sigma(\vw_k\cdot \vx_j) \approx \mathbb{E}[\mPhi\mPhi^{\top}]_{ij}.
\label{Eq.Phi without PoU}
\end{equation}

In the context of solving PDEs by NNs, the function $\mPhi$ typically exhibits low-rank behavior, characterized by a few large eigenvalues and a majority of eigenvalues close to zero, as illustrated in Fig.~\ref{Fig.RFM_PoU result}(a).  Thus in this case, if we consider $N$,$M$ large enough, comparing $\mPhi\mPhi^{\top} \in \sR^{N\times N}$ and $\mPhi_{\mathrm{PoU}}{\mPhi_{\mathrm{PoU}}}^{\top} \in \sR^{N\times N}$ from Eq.~\eqref{Eq.Phi with PoU} and Eq.~\eqref{Eq.Phi without PoU} we have 

\begin{equation}
\begin{aligned}
    \operatorname{erank}(\mPhi\mPhi^{\top}) &\approx  \operatorname{erank}(\mPhi_{\mathrm{PoU}}^\mathrm{L}{\mPhi_{\mathrm{PoU}}^\mathrm{L}}^{\top}) \approx  \operatorname{erank}(\mPhi_{\mathrm{PoU}}^\mathrm{R}{\mPhi_{\mathrm{PoU}}^\mathrm{R}}^{\top}).    
\end{aligned}
\end{equation}

From $\mPhi_{\mathrm{PoU}}=\mPhi_{\mathrm{PoU}}^\mathrm{L}\oplus \mPhi_{\mathrm{PoU}}^\mathrm{R}$, we have 

\begin{equation}
\begin{aligned}
\operatorname{erank}(\mPhi_{\mathrm{PoU}}{\mPhi_{\mathrm{PoU}}}^{\top}) &\approx \operatorname{erank}(\mPhi_{\mathrm{PoU}}^\mathrm{L}{\mPhi_{\mathrm{PoU}}^\mathrm{L}}^{\top}) + \operatorname{erank}(\mPhi_{\mathrm{PoU}}^\mathrm{R}{\mPhi_{\mathrm{PoU}}^\mathrm{R}}^{\top}).
\end{aligned}
\end{equation}

We know that in this case 
\begin{equation}
\begin{aligned}
\operatorname{erank}(\mPhi_{\mathrm{PoU}}{\mPhi_{\mathrm{PoU}}}^{\top}) &\approx 2\operatorname{erank}(\mPhi\mPhi^{\top}).
\end{aligned}
\end{equation}

Here, we provide a concise proof for the case of $M_p=2$, which can be readily extended to the case of $M_p \geq 2$, i.e.,

\begin{equation}
\begin{aligned}
\operatorname{erank}(\mPhi_{\mathrm{PoU}}{\mPhi_{\mathrm{PoU}}}^{\top}) &\approx M_p \operatorname{erank}(\mPhi\mPhi^{\top}).
\end{aligned}
\end{equation}

Thus from the above analysis, we can show that as the number of partition of unity $M_p$ increases, the effective rank of the basis function increases. It is important to note that our assumption is based on the premise that $M$ and $N$ are sufficiently large. Considering practical experimental conditions, $M$ and $N$ are generally finite. Therefore, there are limitations on the choice of $M_p$. When $M_p$ becomes excessively large, there might not be enough sample points and NN basis functions within each region, leading to significant approximation and generalization errors within each region. Hence, it is advisable to impose some restrictions on the value of $M_p$ and avoid choosing an excessively large value without limitations.

In the above theoretical analysis, the kernel corresponding to the supervised loss is $\mG=\mG^{[\va]} = \mPhi\mPhi^{\top}$. Fig.~\ref{Fig.RFM_PoU result}(a) shows the eigenvalue distributions of $\mG$ and  Fig.~\ref{Fig.RFM_PoU result}(b) shows the log-log plot for the relationship between the effective rank and the number of PoU $M_p$. The experimental setup in this case is defined as $M_p=2^i, i=1,2,3,4$. The log2-log2 plot exhibits a clear linear relationship, which is consistent with our theoretical analysis results.

\begin{figure}[!ht]
    \centering
    \setcounter {subfigure} 0(a){
    \includegraphics[scale=0.43]{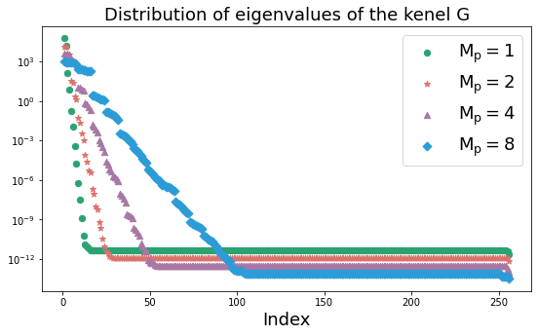}}
    \setcounter {subfigure} 0(b){
    \includegraphics[scale=0.43]{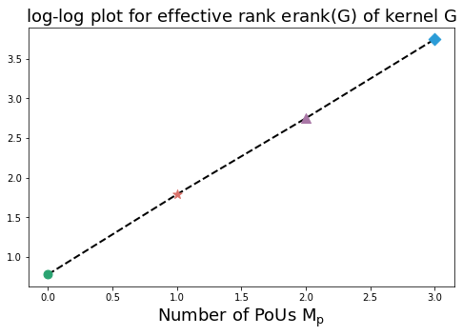}}

    \caption{Number of partitions of unity (PoU) $M_p$. (a) Distribution of singular values of the kernel with different $M_p$'s. (b) Relationship between the effective rank $\operatorname{erank}(\mG)$ and the number of PoU $M_p$. Number of grid points: $N=256$ and number of hidden neurons: $M=1024$. Activation function: $\sigma = \tanh(x)$, and $\{\vw_k\} \sim U(-1, 1)$.}
    \label{Fig.RFM_PoU result}
\end{figure}

\subsubsection{Variance scaling of initialization (VS)}

The initialization of NN parameters is a crucial step in the training process. Here, considering the following NN structure for solving PDEs, i.e.,

\begin{equation}
\phi(\vx;\vtheta) =\sum_{k=1}^M a_k \phi_k(\widetilde{\vx}) = \sum_{j=1}^M a_k \sigma(\vw_k \cdot \widetilde{\vx} + b_k) = \sum_{k=1}^M a_k \sigma(\vw_k \cdot \widetilde{\vx}),
\label{structure}
\end{equation}
where $ \widetilde{\vx} \in [-1,1]^d$ is the normalized input, i.e., $\widetilde{\vx} = \frac{\vx-\vx_0}{r}$ and $\vtheta = \text{vec}(\vtheta_{\text{in}},\vtheta_{\text{out}})$ with $\vtheta_{\text{out}} = \text{vec}(\{a_k\}_{k=1}^{M})$, $\vtheta_{\text{in}} = \text{vec}(\{\vw_k\}_{k=1}^{M})$ are the set of parameters initialized by some distribution. As before, we augment $\tilde{\vx}$  to $(\tilde{\vx},1)^{\top}$ (and still denote it as $\tilde{\vx}$) and let $\vw_k=(\vk_k,b_k)^{\top}$. Here, we mainly focus on the initialization of $\vw_k \sim U(-R_m, R_m)$. We regard $R_m$ as the VS of initialization. In problems involving the use of NNs for solving PDEs, smooth activation functions such as $\sigma(x) = \tanh(x)$ are commonly utilized. Fig.~\ref{Fig.basis_Rm} illustrates the plot of $\tanh(R_m x)$ for various values of $R_m$, where $x$ is in the range of $[-1,1]$. It can be observed that in the function $\tanh(R_m x)$, as $R_m$ increases, the corresponding basis function becomes sharper, resulting in a more localized derivative.

\begin{figure}[!ht]
    \centering
    \setcounter {subfigure} 0(a){
    \includegraphics[scale=0.45]{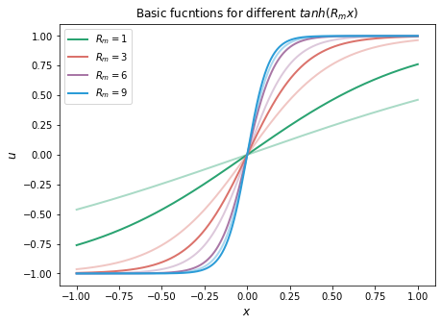}}
    \caption{Basis functions defined on $[-1,1]$, $\{\tanh(R_m x)\}$ for different VS $R_m = \{1,3,6,9\}$.}
    \label{Fig.basis_Rm}
\end{figure}

Similar to the previous case of PoU, here we consider the matrix formed by the NN basis functions 
\begin{equation}
\Phi_{ik}=\sigma(\vw_k\cdot \vx_i) = \tanh(\vw_k\cdot \vx_i), \quad \vw_k \sim U(-R_m, R_m).
\end{equation}

And its corresponding kernel is given by $\mG = \mG^{[\va]} = \mPhi\mPhi^{\top}$.  Fig.~\ref{Fig.RFM_Rm result}(a) shows the eigenvalue distributions of $\mG$ and  Fig.~\ref{Fig.RFM_Rm result}(b) shows the log-log plot for the relationship between the effective rank and the VS $R_m$. The experimental setup in this case is defined as $R_m=2^i, i=1,2,3,4$. The experimental results show that as $R_m$ increases, the effective rank also increases.

\begin{figure}[!ht]
    \centering
    \setcounter {subfigure} 0(a){
    \includegraphics[scale=0.42]{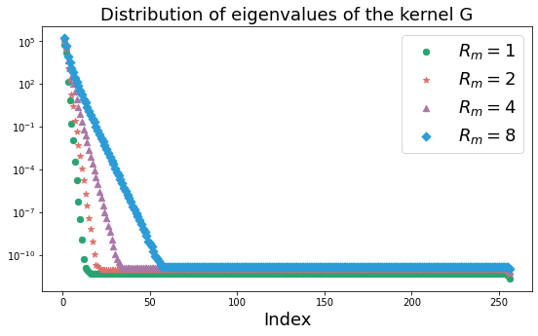}}
    \setcounter {subfigure} 0(b){
    \includegraphics[scale=0.42]{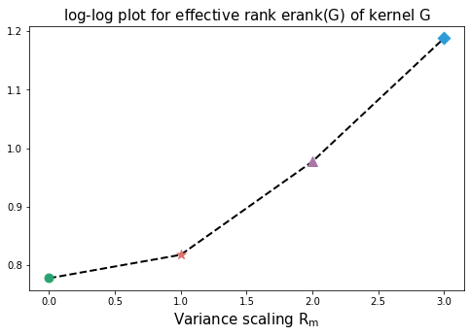}}

    \caption{VS of initialization $R_m$. (a) Distribution of eigenvalues of the kernel with different $R_m$. (b) Relationship between the effective rank and VS of initialization $R_m$. Number of grid points: $N=256$ and number of hidden neurons: $M=1024$. Activation function: $\sigma = \tanh(x)$, and $M_p = 1$.}
    \label{Fig.RFM_Rm result}
\end{figure}

This experimental observation can be readily explained by considering the expansion of the function space associated with the activation function $\sigma(\vw \cdot \vx) = \tanh(\vw \cdot \vx)$, where $\vw \sim U(-R_m, R_m)$. As $R_m$ increases, the range of possible values for $\vw$ expands, leading to a richer and more expressive function space. Consequently, the number of hidden neurons, which can be interpreted as the number of basis functions $\sigma(\vw \cdot \vx)$, also increases. This expansion directly translates into an increase in the effective rank of the matrix formed by the NN basis functions $\Phi_{ik}$. Fig.~\ref{Fig.RFM_Rm effective} visually demonstrates this relationship, showing how larger values of $R_m$ (e.g., $R_m=9$) lead to a larger function space and consequently to higher values of the effective rank. Conversely, with a relatively small $R_m$ value (e.g., $R_m=1$), the function space is constrained. In such case, increasing the number of hidden neurons ($M$) does notably affect the effective rank since the function space remains restricted.

\begin{figure}[!ht]
    \centering
    \setcounter {subfigure} 0(a){
    \includegraphics[scale=0.35]{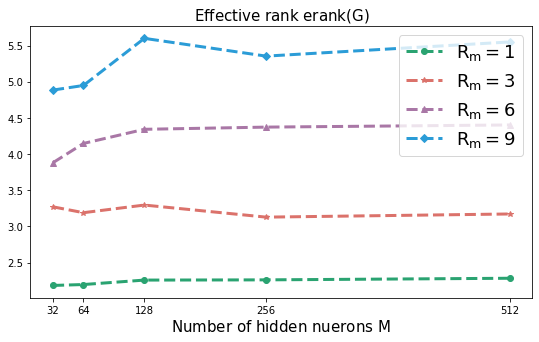}}
    \caption{VS of initialization $R_m$. (a) Relationship between the effective rank $\operatorname{erank}(\mG)$ and number of hidden neurons $M$. The number of grid points $N$ and hidden neurons $M$ are the same, i.e., $M=N$. }
    \label{Fig.RFM_Rm effective}
\end{figure}

From the aforementioned experimental results, as the VS $R_m$ increases, the corresponding effective rank also increases.

\subsection{Effective rank in numerical examples}

The preceding analysis is predicated on the random feature model, where the kernel $\mG=\mG^{[\va]}$ is pre-determined and remains invariant throughout the training process. To validate this analysis, we present two illustrative experiments involving NN models, focusing on regression problems and PINN models for the Helmholtz equation. In both examples, we investigate the relationship between the convergence of the training error and the effective rank of the kernel $\mG$.

\paragraph{\textbf{Regression problem}}

We first consider a regression problem that involves approximating a function $u(\vx)$ using NN functions. We consider a target function,
\begin{equation}
    u(x) = \big(1-\tfrac{1}{2}x^2\big)\cos\big[m(x+\tfrac{1}{2}x^3)\big],
\end{equation}
with $m$ the free parameter related to the frequency of the functions. Fig.~\ref{Fig.NN_Approximation result}(a) illustrates this function for $m=30$. The experimental results, presented in Fig.~\ref{Fig.NN_Approximation result} and Tab.~\ref{tab:Regression}, demonstrate the effectiveness of PoU and VS with increasing values of $R_m$ in enhancing the training dynamics. In this case, the kernel $\mG$ evolves during training. Fig.~\ref{Fig.NN_Approximation result}(b) depicts this evolution for different cases, where lighter points represent the initial state and darker points represent the final state. Interestingly, the eigenvalue distribution of $\mG$ stays relatively consistent across various initialization settings, except for the case when $M_p=1$ and $R_m=1$. Tab.~\ref{tab:Regression} shows that a larger effective rank indicates faster convergence of the training error.

\begin{figure}[!ht]
    \centering
    \setcounter {subfigure} 0(a){
    \includegraphics[scale=0.23]{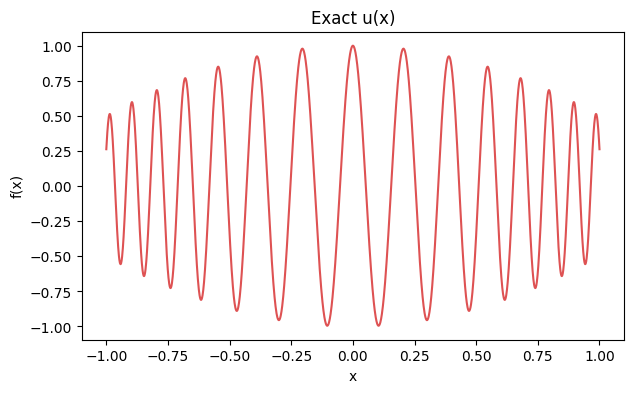}}
    \setcounter {subfigure} 0(b){
    \includegraphics[scale=0.25]{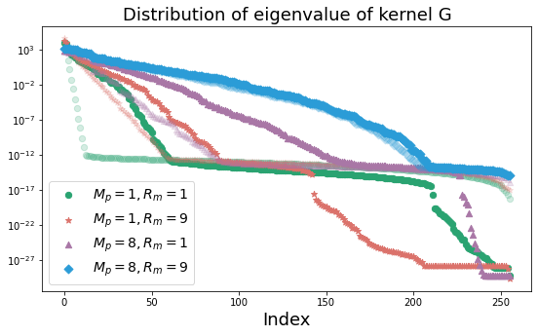}}
    \setcounter {subfigure} 0(c){
    \includegraphics[scale=0.25]{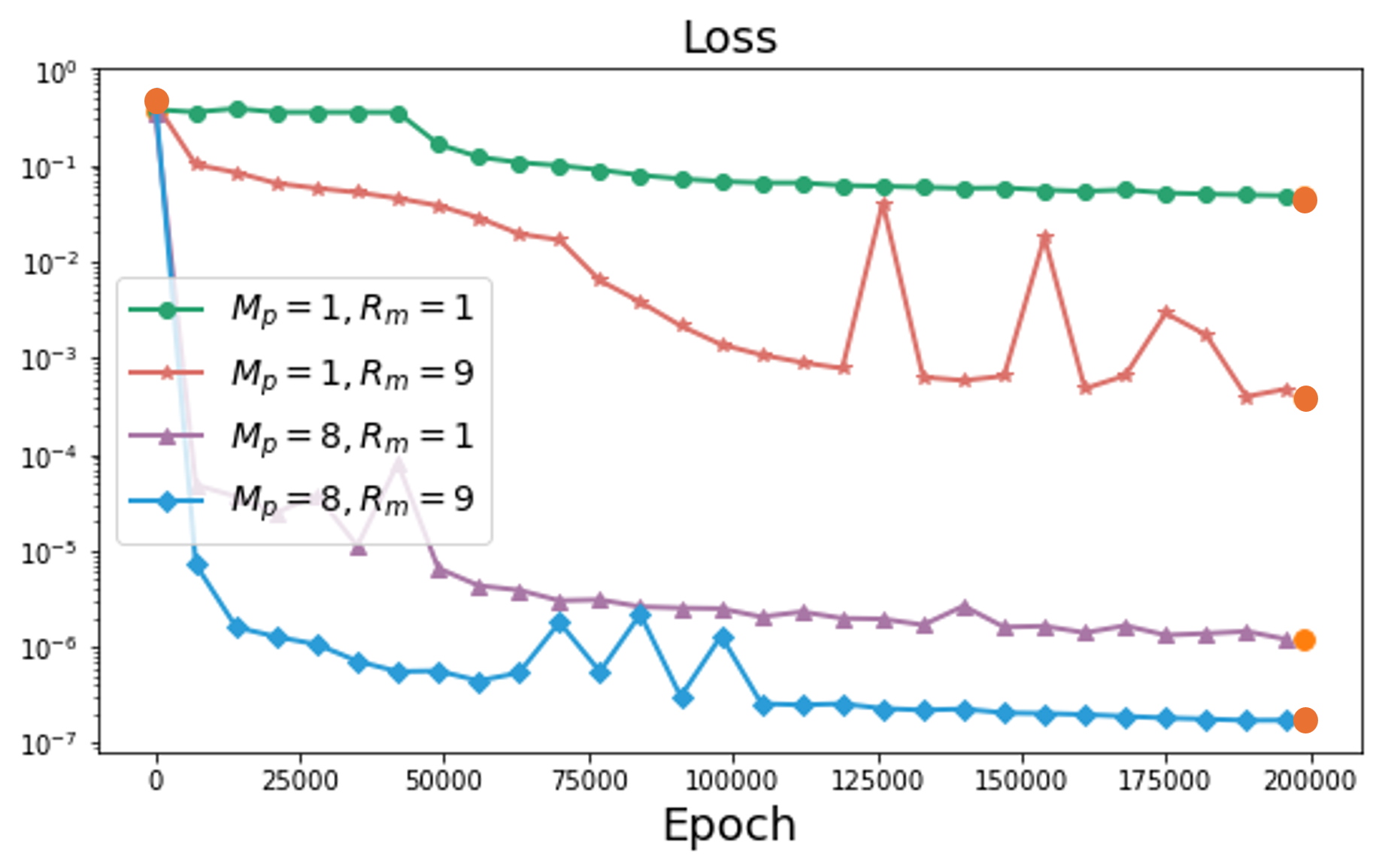}}
    \caption{Regression problem. (a) Exact function $u(x)$. (b) Eigenvalues of kernel $\mG$ at initial and final step of the training process. (c) Loss curve of supervised loss $L$. The orange points denote the specific time instances at which the kernel is computed: one at the initial stage and the other at the final stage of the training process.}
    \label{Fig.NN_Approximation result}
\end{figure}

\begin{table}[!ht]
    \centering
    \begin{tabular}{|c|c|c|c|}
    \hline
            $R_m$ & $M_p$ &Initial $\operatorname{erank}(\mG)$& Final $\operatorname{erank}(\mG)$  \\
    \hline
         1& 1  &1.725&3.036 \\
         9& 1  &  2.214 &3.119 \\
         1& 8  & 13.26&17.30 \\
         9&8 &16.78 &17.99   \\

    \hline
    \end{tabular}
    \caption{Regression: initial and final effective rank $\operatorname{erank}(\mG)$ for different settings for VS of initialization $R_m$ and number of PoU ($M_p$).}
    \label{tab:Regression}
\end{table}

\paragraph{\textbf{1D Helmholtz equation}}
 We then consider using PINN to solve a \textit{Helmholtz equation in one-dimension space} given as follows:
\begin{equation}
\begin{aligned}
 u_{xx}(x) + k^2u(x) &= f(x),\quad  x \in \Omega:=(-1,1), \\
 u(x)&=h(x),\quad x \in \{-1,1\}
\end{aligned}
\end{equation}
where $k=10$. We choose an exact solution to this problem as a multiscale function given by:
\begin{equation}
   u(x) = f_0(x) +c_1x+c_0,
\label{Eq.multiscale}
\end{equation}
where $f_0(x) = \sin(4x)/4 -\sin(8x)/8 + \sin(24x)/36$, $c_1 = (f_0(-1)-f_0(1))/2$ and $c_0 = -(g_0(-1)+g(1))/2$ (as shown in Fig.~\ref{Fig.Helmholtz}(a)). 

The experimental results, presented in Fig.~\ref{Fig.Helmholtz} and Tab.~\ref{tab:Helmholtz}, demonstrate the effectiveness of PoU and VS with increasing values of $R_m$ in enhancing the training dynamics of PINN for solving PDEs. As evident from Tab.~\ref{tab:Helmholtz}, both PoU and VS lead to a significant increase in the effective rank compared to the case without these initialization techniques. This observation is further corroborated by Fig.~\ref{Fig.Helmholtz}(c), which showcases a substantially faster convergence of the loss curve when PoU and VS are employed, particularly with larger values of $R_m$. These results highlight the effectiveness of the effective rank as a metric for quantifying training difficulty and underscore the ability of these initialization techniques to improve training efficiency. This example also highlights the increased optimization difficulty associated with PINN loss compared to supervised loss. As evident from the effective rank values presented in Tab.~\ref{tab:Regression} and Tab.~\ref{tab:Helmholtz}, PINN loss consistently exhibits a lower effective rank, indicating a more challenging training process.

\begin{figure}[!ht]
    \centering
    \setcounter {subfigure} 0(a){
    \includegraphics[scale=0.23]{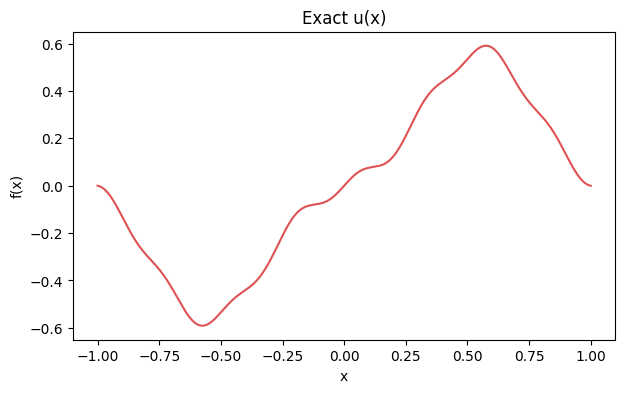}}
    \setcounter {subfigure} 0(b){
    \includegraphics[scale=0.26]{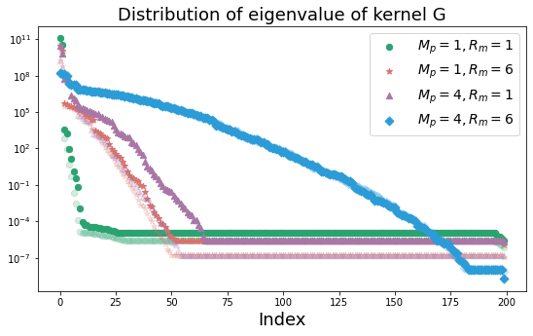}}
    \setcounter {subfigure} 0(c){
    \includegraphics[scale=0.26]{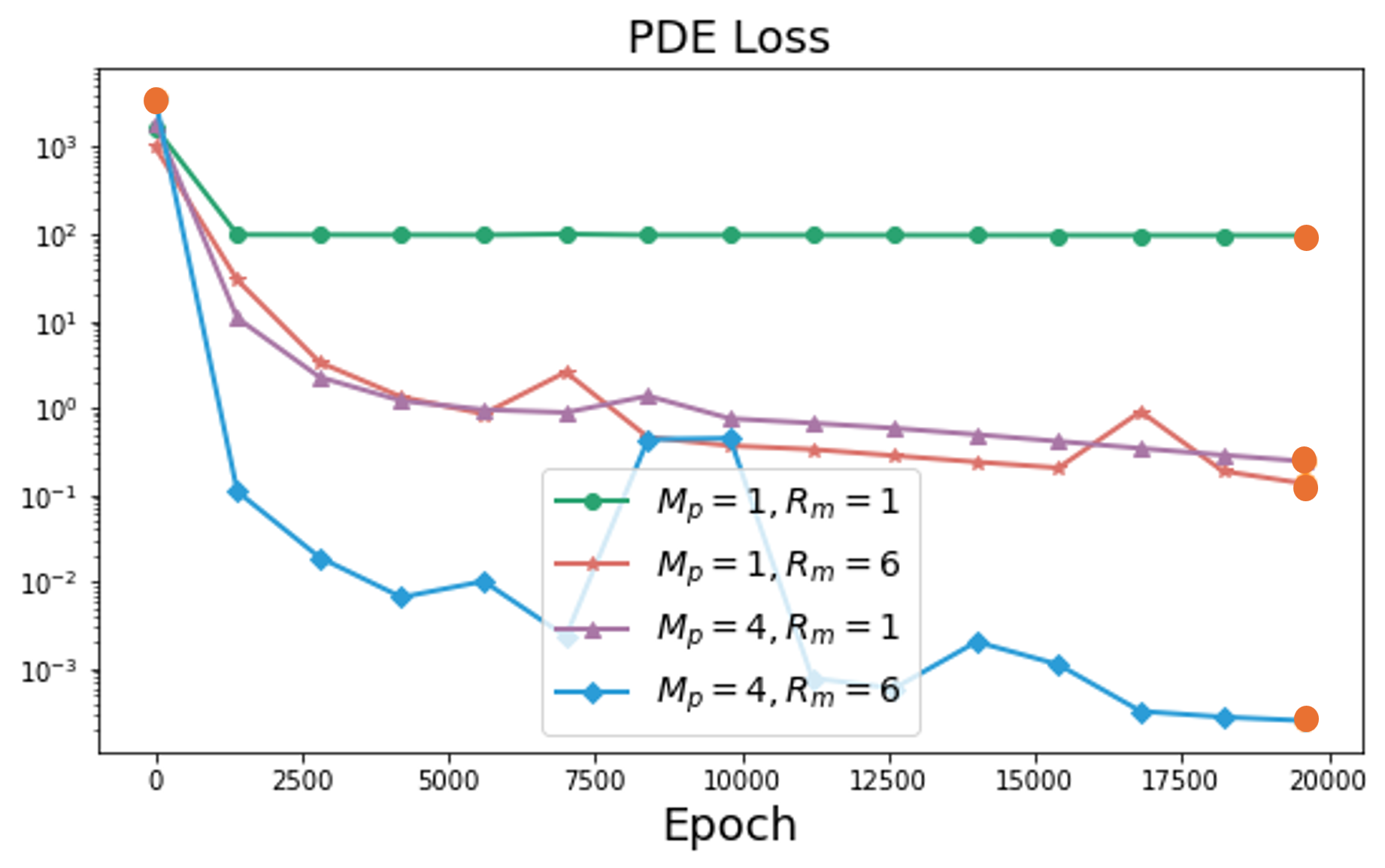}}
    \caption{1D Helmholtz equation. (a) Exact function $u(x)$. (b) Eigenvalues of kernel $\mG$ at initial and final step of the training process. (c) Loss curve of PINN loss $L_{\text{res}}$.  The orange points denote the specific time instances at which the kernel is computed: one at the initial stage and the other at the final stage of the training process. }
    \label{Fig.Helmholtz}
\end{figure}

\begin{table}[!ht]
    \centering
    \begin{tabular}{|c|c|c|c|}
    \hline
            $R_m$ & $M_p$ &Initial $\operatorname{erank}(\mG)$& Final $\operatorname{erank}(\mG)$  \\
    \hline
         1& 1  &1.731&1.693 \\
         6& 1  &  1.768 & 1.816 \\
         1& 4  & 2.112&1.660 \\
         6&  4&11.79 &11.95   \\

    \hline
    \end{tabular}
    \caption{1D Helmholtz equation: initial and final effective rank $\operatorname{erank}(\mG)$ for different settings for VS of initialization $R_m$ and number of PoU ($M_p$).}
    \label{tab:Helmholtz}
\end{table}

\section{Numerical examples on NN based PDE solvers}

In this section, we conduct a series of numerical experiments on three NN-based PDE solvers: PINN~\cite{PINNori}, Deep Ritz~\cite{DeepRitz}, and the operator learning framework DeepOnet~\cite{DeepOnetN}. These experiments involve the application of the initialization techniques, PoU and VS, as previously discussed in the analysis. All code and data accompanying this paper are publicly available at \\ \href{https://github.com/CChenck/Quantify-Training-Difficulty-PoU-and-VS}{https://github.com/CChenck/Quantify-Training-Difficulty-PoU-and-VS}.

\subsection{PINN}

\paragraph{\textbf{Example 4.1.1}} A \textit{Helmholtz equation in two-dimension space} is given as follows:
\begin{equation}
\begin{aligned}
 \Delta u(x, y)+k^2 u(x, y)&=q(x, y), \quad(x, y) \in \Omega:=(-1,1)^2, \\
 u(x, y)&=h(x, y), \quad(x, y) \in \partial \Omega,
\end{aligned}
\end{equation}
where $\Delta$ is the Laplace operator. We choose $u(x, y)=$ $\sin \left(a_1 \pi x\right) \sin \left(a_2 \pi y\right)$, corresponding to a source term of the form
$$
q(x, y)=-\big(\left(a_1 \pi\right)^2+ \left(a_2 \pi\right)^2\big)\sin \left(a_1 \pi x\right) \sin \left(a_2 \pi y\right)+k^2 \sin \left(a_1 \pi x\right) \sin \left(a_2 \pi y\right),
$$
where we take $a_1=1$, $a_2=4$ and $k^2=125$ and the weight $\gamma = 100$.

The results are shown in Fig.~\ref{Fig.PINN2DHelmholtz error1}, Fig.~\ref{Fig.PINN2DHelmholtz error2} and Tab.~\ref{tab:PINN2DHelmholtz}. Fig.~\ref{Fig.PINN2DHelmholtz error2}(b) depicts the training error curve, while Fig.~\ref{Fig.PINN2DHelmholtz error2}(c) showcases the relative $L_2$ error curve. The striking similarity in the downward trends of these curves suggests that both initialization strategies effectively accelerate training, leading to improved accuracy.

\begin{figure}[!ht]
    \centering
    \setcounter {subfigure} 0(a.1){
    \includegraphics[scale=0.19]{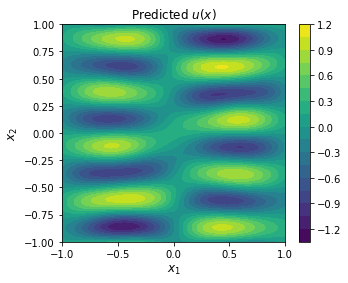}}
    \setcounter {subfigure} 0(b.1){
    \includegraphics[scale=0.19]{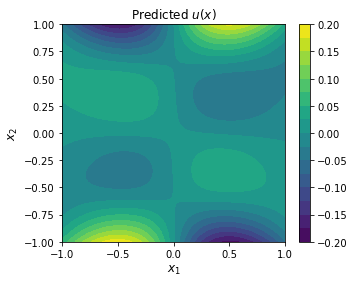}}
    \setcounter {subfigure} 0(c.1){
    \includegraphics[scale=0.19]{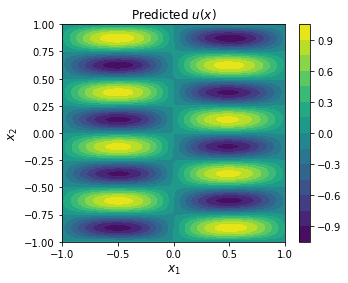}}
    \setcounter {subfigure} 0(d.1){
    \includegraphics[scale=0.19]{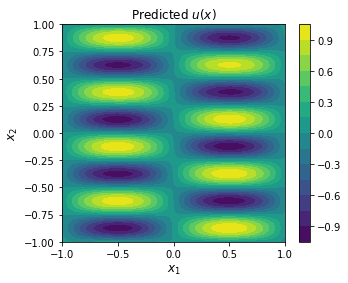}}\\
    \setcounter {subfigure} 0(a.2){
    \includegraphics[scale=0.19]{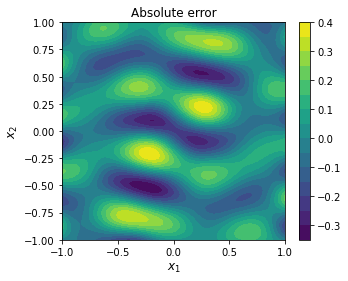}}
    \setcounter {subfigure} 0(b.2){
    \includegraphics[scale=0.19]{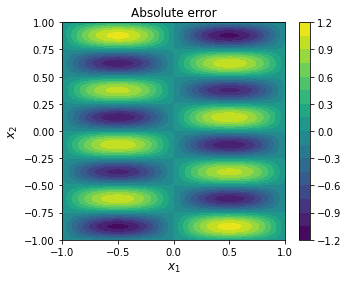}}
    \setcounter {subfigure} 0(c.2){
    \includegraphics[scale=0.18]{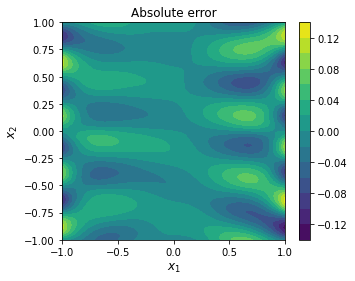}}
    \setcounter {subfigure} 0(d.2){
    \includegraphics[scale=0.18]{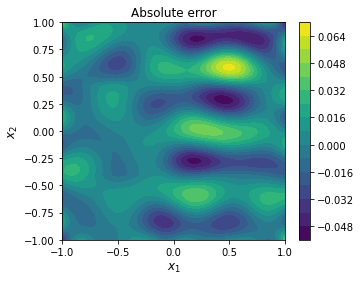}}
    \caption{2D Helmholtz equation: first row: prediction of the PINN models; second row: pointwise absolute error. From left to right, the corresponding situations in the Tab.~\ref{tab:PINN2DHelmholtz} are as follows.}
    \label{Fig.PINN2DHelmholtz error1}
\end{figure}

\begin{figure}[!ht]
    \centering
    \setcounter {subfigure} 0(a){
    \includegraphics[scale=0.25]{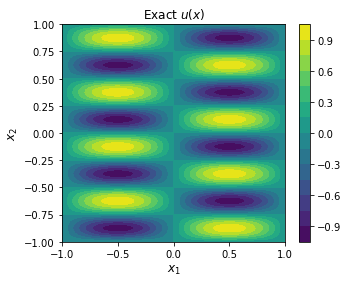}}    
    \setcounter {subfigure} 0(b){
    \includegraphics[scale=0.25]{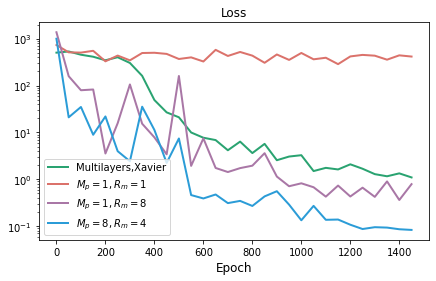}}
    \setcounter {subfigure} 0(c){
    \includegraphics[scale=0.25]{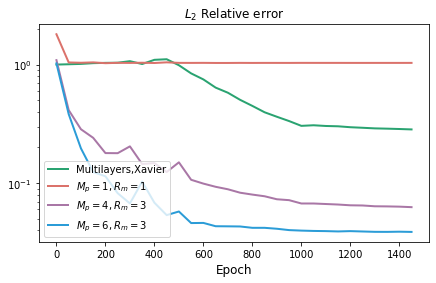}}
    \caption{2D Helmholtz equation: (a) Exact solution. (b) Loss curve of $L$. (c) Loss curve of relative $L_2$ error.}
    \label{Fig.PINN2DHelmholtz error2}
\end{figure}

\begin{table}[!ht]
    \centering
    \begin{tabular}{|c|c|c|c|c|}
    \hline
           $R_m$ & $M_p$ &NN structure& $L_{\infty}$& Rel. $L_2$  \\
    \hline
         Xavier& 1  & $[2,50,50,50,1]$&3.98e-1 & 2.79e-1 \\
         $U(\left[-1, 1\right]^2)$& 1  &  $[2,4800,1]$ &1.11& 1.03 \\
         $U(\left[-3, 3\right]^2)$& 4 $(N_x=1,N_y=4)$ & $[2,4800,1]$&1.31e-1 &6.10e-2\\
         $U(\left[-3, 3\right]^2)$ &6 $(N_x=2,N_y=3)$ &$[2,4800,1]$ &6.64e-2  &3.85e-2 \\

    \hline
    \end{tabular}
    \caption{2D Helmholtz equation: $L_{\infty}$, relative $L_2$ prediction error for different settings for VS of initialization, number of PoU ($M_p$), and NN structure.}
    \label{tab:PINN2DHelmholtz}
\end{table}

\newpage
\paragraph{\textbf{Example 4.1.2}} A steady state \textit{Burger's equation in one-dimension space} is given as follows:
\begin{equation}
\begin{aligned}
 -u{''}(x) + u^{\prime}(x)u(x) &= f(x), \quad x\in (0,8), \\
 u(0) &= c_1,\quad u(8)=c_2.
\end{aligned}
\end{equation}
We choose $u(x)$ as $u(x)=\sin \left(3 \pi x+\frac{3 \pi}{20}\right) \cos \left(2 \pi x+\frac{\pi}{10}\right)+2$.

The results are shown in Fig.~\ref{Fig.1Dburgers} and Tab.~\ref{tab:1Dburgers}. This example demonstrates that both initialization techniques exhibit a similar training acceleration effect for nonlinear PDEs.

\begin{figure}[!ht]
    \centering
    \setcounter {subfigure} 0(a){
    \includegraphics[scale=0.23]{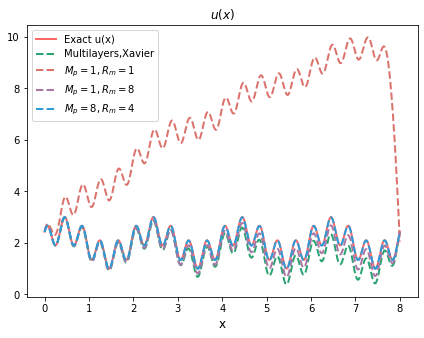}}
    \setcounter {subfigure} 0(b){
    \includegraphics[scale=0.23]{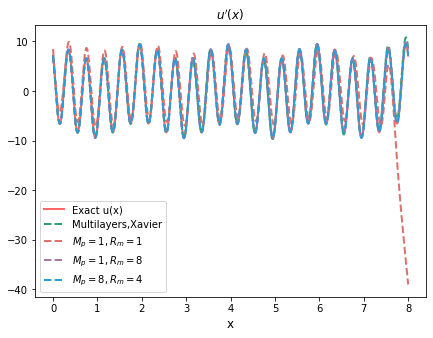}}
    \setcounter {subfigure} 0(c){
    \includegraphics[scale=0.23]{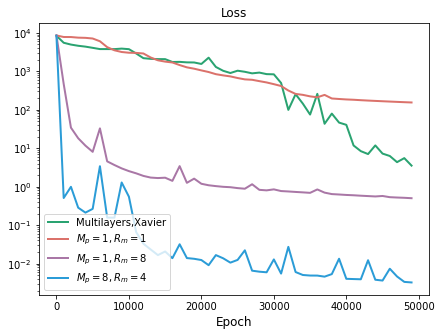}}
    \caption{1D Burger's equation: (a) Exact solution $u(x)$ versus the predictions of PINN models. (b) Derivative of the exact solution $u^{\prime}(x)$ versus the predictions of PINN models. (c) Loss curve of $L$. }
    \label{Fig.1Dburgers}
\end{figure}

\begin{table}[!ht]
    \centering
    \begin{tabular}{|c|c|c|c|c|c|}
    \hline
            $R_m$ & $M_p$ &NN structure& $L_{\infty}$& Rel. $L_2$ & Rel. $H_1$ \\
    \hline
         Xavier& 1  & $[1,50,50,50,1]$&3.88e-1 & 1.09e-1 & 4.86e-2 \\
         $U\left(\left[-1, 1\right]\right)$& 1  &  $[1,512,1]$ &8.28& 2.52 & 1.34 \\
         $U\left(\left[-8, 8\right]\right)$& 1 & $[1,512,1]$&3.87e-1 &  1.09e-1& 4.81e-2\\
         $U\left(\left[-4, 4\right]\right)$ &8 &$[1,512,1]$ &1.11e-3  &3.51e-3 & 4.90e-4\\
    \hline
    \end{tabular}
    \caption{1D Burger's equation: $L_{\infty}$, relative $L_2$ and relative $H_1$ prediction error for different settings for VS of initialization, number of PoU ($M_p$), and NN structure. The activation function used in these models is $\tanh(x)$.}
    \label{tab:1Dburgers}
\end{table}

\newpage
\subsection{Deep Ritz}

The Deep Ritz model~\cite{DeepRitz} is also a deep learning-based approach for solving partial differential equations (PDEs). It formulates the loss function by considering the variation form of the PDEs.

\paragraph{\textbf{Example 4.2.1}} A \textit{second-order elliptic equation in one-dimension space} is given as follows:

\begin{equation}
\begin{aligned}
-u^{\prime \prime}(x)+u(x)&=f(x),  \quad x \in(-1,1), \\
u^{\prime}(-1)&=u^{\prime}(1)=0.
\end{aligned}
\end{equation}

The corresponding variational problem becomes
\begin{equation}
\min _{v \in H^1} \mathcal{J}(v)=\int_{-1}^1\left(\frac{1}{2}\left(v^{\prime}\right)^2+\frac{1}{2} v^2-f v\right) \diff{x}.
\end{equation}

We choose $u(x)$ as Eq.~\eqref{Eq.multiscale}. We employ shallow NNs structure with different activation functions. Then, the Deep Ritz loss reads as 
\begin{equation}
   L =  \int_{-1}^1\left(\frac{1}{2}\left(u_{\vtheta}^{\prime}\right)^2+\frac{1}{2} u_{\vtheta}^2-f u_{\vtheta}\right) \diff{x}.
\end{equation}

The results from the Deep Ritz method are shown in Fig.~\ref{Fig.DeepRitz} and Tab.~\ref{Tab:DeepRitz}.

\begin{figure}[!ht]
    \centering
    \setcounter {subfigure} 0(a){
    \includegraphics[scale=0.23]{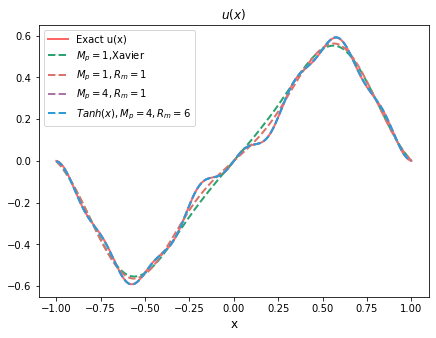}}
    \setcounter {subfigure} 0(b){
    \includegraphics[scale=0.23]{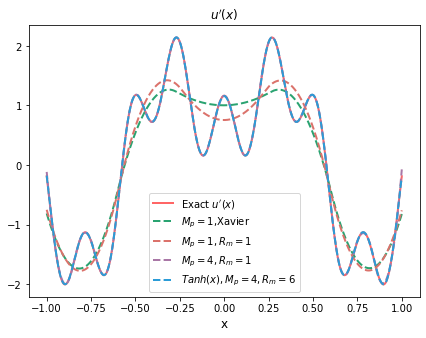}}
    \setcounter {subfigure} 0(c){
    \includegraphics[scale=0.23]{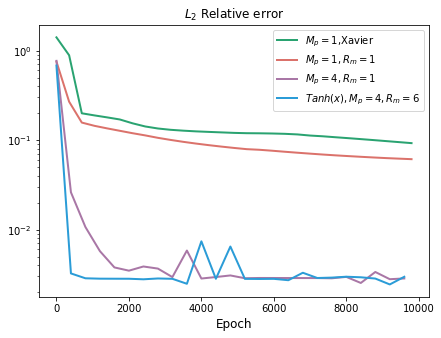}}
    \caption{1D second-order elliptic equation: (a) Exact solution $u(x)$ versus the predictions of PINN models. (b) Derivative of the exact solution $u^{\prime}(x)$ versus the predictions of PINN models. (c) Loss curve of relative $L_2$ error.}
    \label{Fig.DeepRitz}
\end{figure}

\begin{table}[h]
    \centering
    \begin{tabular}{|c|c|c|c|c|c|}
    \hline
            $R_m$ & $M_p$ &Activation Func.& $L_{\infty}$& Rel. $L_2$ & Rel. $H_1$ \\
    \hline
         Xavier& 1  & $\max\{x^3,0\}$&8.49e-02 & 9.11e-02 & 3.64e-01 \\
         $U\left(\left[-1, 1\right]\right)$& 1  & $\max\{x^3,0\}$ &5.19e-02& 6.07e-02 & 3.33e-01\\
         $U\left(\left[-1, 1\right]\right)$& 4 & $\max\{x^3,0\}$&2.38e-03 &  2.88e-03& 7.87e-03\\
         $U\left(\left[-6, 6\right]\right)$ &4 &$\tanh(x)$ &1.85e-03  &2.76e-03 & 2.43e-03\\
    \hline
    \end{tabular}
    \caption{1D second-order elliptic equation: $L_{\infty}$, relative $L_2$ and relative $H_1$ prediction error for different settings for VS of initialization, number of PoU ($M_p$), and activation functions. The NN structure used in these model is $[1,512,1]$.}
    \label{Tab:DeepRitz}
\end{table}

\newpage
\subsection{DeepONet}

Deep operator network (DeepONet)~\cite{DeepOnetN,lu2019deeponet} is a neural operator model specifically designed for operator learning. Operator learning involves the task of learning and approximating operators or mappings between input and output function spaces. In this context, let $\mathcal{V}$ and $\mathcal{U}$ represent the spaces of two functions, $v$ and $u$, respectively, and let $D$ and $D^{\prime}$ denote different domains. The mapping from the input function $v$ to the output function $u$ is represented by an operator $\mathcal{G}$:
$$
\mathcal{G}: \mathcal{V} \ni v \mapsto u \in \mathcal{U} .
$$
In our examples, we aim to approximate $\mathcal{G}$ using DeepONet model $\mathcal{G}_{\vtheta}$ and train the network using a training dataset $\mathcal{T}=$ $\left\{\left(v^{(1)}, u^{(1)}\right),\left(v^{(2)}, u^{(2)}\right), \ldots,\left(v^{(m)}, u^{(m)}\right)\right\}$.

The standard DeepOnet structure is defined as:

\begin{equation}
\mathcal{G}_{\vtheta}(v)(y) = \sum_{k=1}^p \sum_{i=1}^n a_i^k \sigma\left(\sum_{j=1}^m \xi_{i j}^k v\left(x_j\right)+c_i^k\right) \sigma\left(w_k \cdot y+b_k\right).
\end{equation}

Here, $\sigma\left(w_k \cdot y+b_k\right)$ represents the trunk net, which takes the coordinates $y \in D^{\prime}$ as input, and $\sigma\left(\sum_{j=1}^m \xi_{i j}^k u\left(x_j\right)+c_i^k\right)$ represents the branch net, which takes the discretized function $v$ as input. We can interpret the trunk net as the basis functions for solving PDEs mentioned above. By extending the strategy of PoU and VS of initialization to DeepONet, the structure becomes:

\begin{equation}
\mathcal{G}_{\vtheta}(v)(y) = \sum_{k=1}^p \sum_{i=1}^n a_i^k \sigma\left(\sum_{j=1}^m \xi_{i j}^k u\left(x_j\right)+c_i^k\right) {\boldmath{\sum_{h=1}^{M_p}\psi_h(y)}}\sigma\left(w_k \cdot y+b_k\right),
\end{equation}
where $\sum_{h=1}^{M_p}\psi_h(y)$ represents the construction function.

We conduct experiments on two different problems: the 1D Burgers' equation and the 2D Darcy Flow problem. We utilize the dataset generated in ~\cite{li2020FNO,lu2022ComparFNO}. Unless otherwise specified, we use $N = 1000$ training instances and $200$ testing intrances, with supervised learning techniques.


\paragraph{\textbf{Example 4.3.1}} A \textit{Burger's equation in one-dimension space} is given as follows:
$$
\begin{aligned}
\partial_t u(x, t)+\partial_x\left(u^2(x, t) / 2\right) & =\nu \partial_{x x} u(x, t), & & x \in(0,1), t \in(0,1], \\
u(x, 0) & =u_0(x), & & x \in(0,1) ,
\end{aligned}
$$
where the viscosity is set to $v=0.01$. Our objective is to utilize DeepOnet to learn the solution operator that maps the initial condition $u_0(x)$ to $u(x,1)$ of the 1D Burgers' equation, i.e., $\mathcal{G}: u_0(x) \mapsto u(x,1)$.   The initial condition $u_0(x)$ is generated from a Gaussian random field with a Riesz kernel, denoted by $\text{GRF} \sim 
\mathcal{N}\left(0,49^2(-\Delta+49I)^{-4}\right)$ and $\Delta$ and $I$ represent the Laplacian and the identity. We utilize a spatial resolution of $128$ grids to represent both the input and output functions.

The results are shown in Fig.~\ref{Fig.DeepOnetBurger} and Tab.~\ref{Tab:DeepOnetBurger}. From the experimental results, it can be observed that after applying the PoU technique, the decay of the loss function becomes faster. Consequently, within the same number of training epochs, a higher level of accuracy is achieved.
\begin{figure}[!ht]
    \centering
    \setcounter {subfigure} 0(a){
    \includegraphics[scale=0.23]{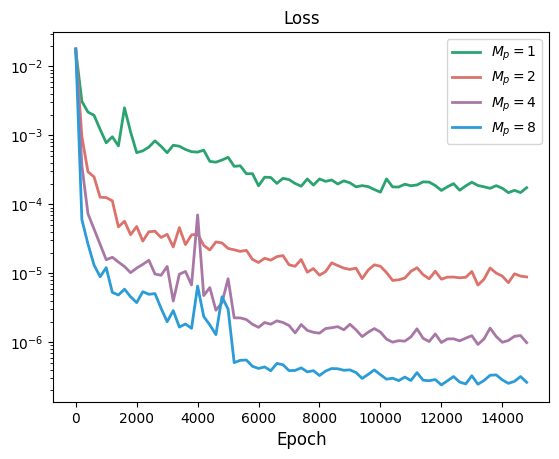}}
    \setcounter {subfigure} 0(b){
    \includegraphics[scale=0.23]{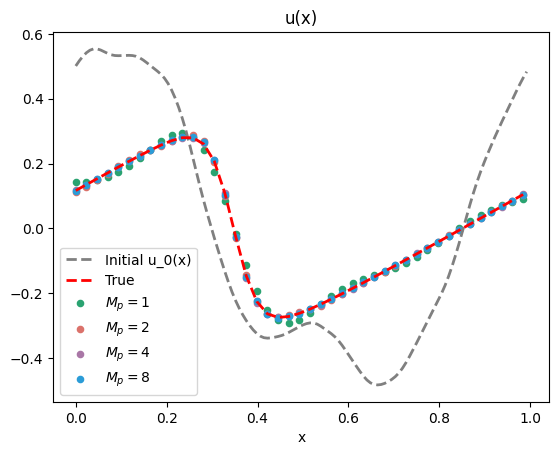}}
    \setcounter {subfigure} 0(c){
    \includegraphics[scale=0.23]{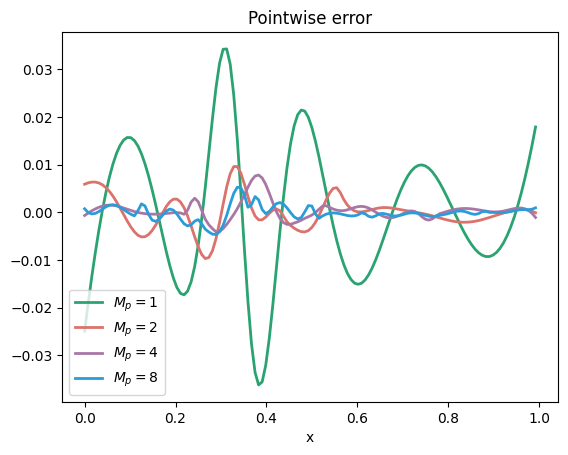}}
    \caption{Operator learning: 1D Burger's equation: (a) Loss curve of the supervised loss $L$. (b) An example of input function $u_0(x)$ and the exact solution $u(x,1)$ versus the prediction of DeepONet models. (c) Pointwise error for the example.}
    \label{Fig.DeepOnetBurger}
\end{figure}

\begin{table}[h]
    \centering
    \begin{tabular}{|c|c|c|c|c|}
    \hline
            $M_p$  & Rel. $L_{\infty}$ &Rel. $L_{2}$ & Rel. $L_{\infty}$ &Rel. $L_{2}$ \\
            &  128 grids & 128 grids &  512 grids  &512 grids \\  
    \hline
         1&  3.72e-01
 &1.02e-01 & 4.10e-01 & 1.03e-01\\
 
         2& 2.22e-01  &  2.96e-02 &2.25e-01& 2.97e-02 \\
         
         4& 8.88e-02 & 1.42e-02&8.95e-02 &  1.43e-02\\
         
         8 &8.09e-02 &1.13e-02 &8.10e-02 &1.14e-02\\

    \hline
    \end{tabular}
    \caption{Operator learning: 1D Burger's equation: relative $L_\infty$ and relative $L_2$ prediction error for different settings for the number of Points of Uncertainty (PoU) ($M_p$). The structure of the trunk network is $[1,128,128,128]$. The training grid size is $128$. The testing grid size is $512$.}
    \label{Tab:DeepOnetBurger}
\end{table}

\paragraph{\textbf{Example 4.3.2}} A \textit{Darcy flows in two-dimension space} is given as follows:

$$
\begin{aligned}
-\nabla \cdot(k(x,y) \nabla u(x,y)) & =f(x,y), & & (x,y) \in(0,1)^2 \\
u(x,y) & =0, & & (x,y) \in \partial(0,1)^2,
\end{aligned}
$$
where $a$ represents the permeability field, $u$ denotes the pressure field, and $f$ represents the source term. Our objective is to employ DeepOnet to approximate the operator that maps $k(x,y)$ to $u(x,y)$, i.e., $\mathcal{G}: k(x,y) \mapsto u(x,y)$. The coefficient field $k$ is defined on $\psi(\mu)$, where $\mu \sim \mathcal{N}\left(0,(-\triangle+9I)^{-2}\right)$. The mapping function $\psi$ binarizes the function, converting positive values to 12 and negative values to 3. The grid resolution for both $a$ and $u$ is $85 \times 85$.

The results are shown in Fig.~\ref{Fig.DeepOnetDarcy_2}, Fig.~\ref{Fig.DeepOnetDarcy_1}, and Tab.~\ref{Tab:DeepOnetDarcy}. The experimental results once again demonstrate that the PoU strategy can accelerate the convergence rate of the loss function. Furthermore, by incorporating PoU, a simple shallow NN can outperform a deep NN. Therefore, it can be observed that utilizing PoU enables us to achieve better results with a NN that requires fewer parameters.

\begin{figure}[!ht]
    \centering
    \setcounter {subfigure} 0(a){
    \includegraphics[scale=0.27]{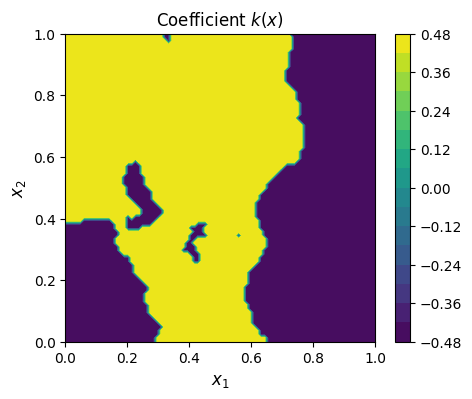}}    
    \setcounter {subfigure} 0(b){
    \includegraphics[scale=0.27]{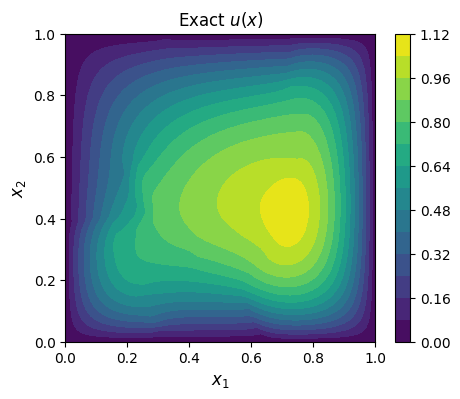}}
    \setcounter {subfigure} 0(c){
    \includegraphics[scale=0.27]{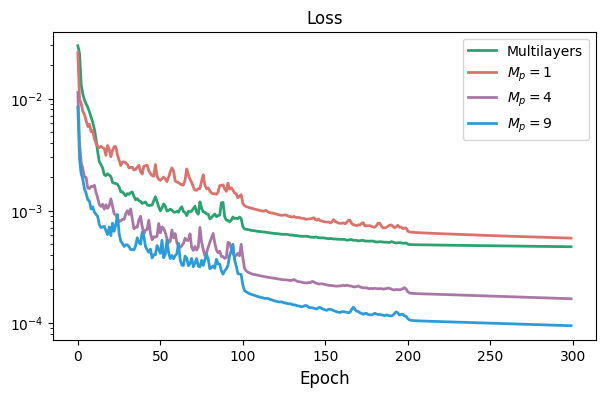}}
    \caption{Operator learning: 2D Darcy flow equation: (a) An example of input coefficient function $k(x_1,x_2)$. (b) The corresponding exact solution $u(x_1,x_2)$. (c) Loss curve of the supervised loss $L$.}
    \label{Fig.DeepOnetDarcy_2}
\end{figure}

\begin{figure}[!ht]
    \setcounter {subfigure} 0(a.1){
    \includegraphics[scale=0.19]{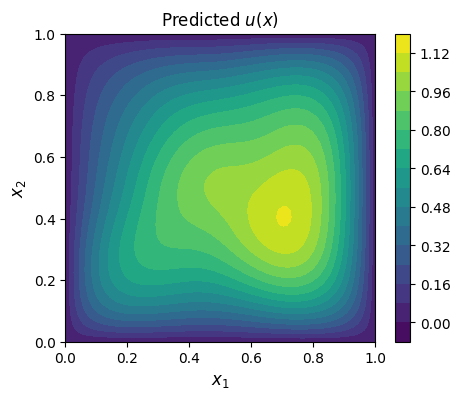}}
    \setcounter {subfigure} 0(b.1){
    \includegraphics[scale=0.19]{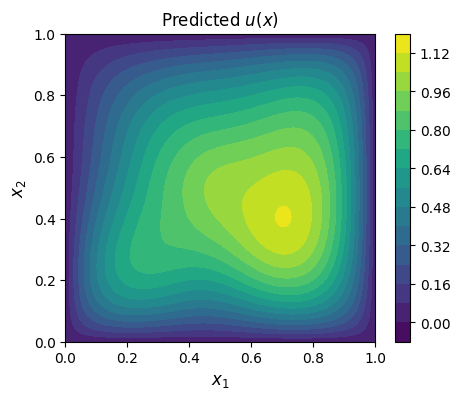}}
    \setcounter {subfigure} 0(c.1){
    \includegraphics[scale=0.19]{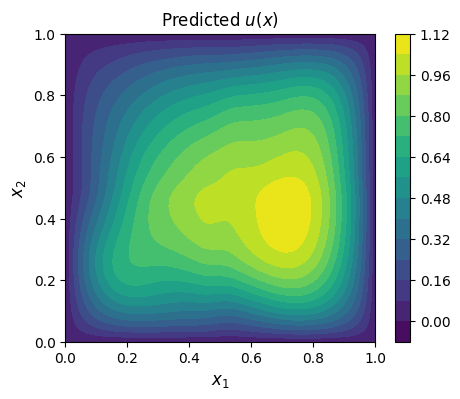}}
    \setcounter {subfigure} 0(d.1){
    \includegraphics[scale=0.19]{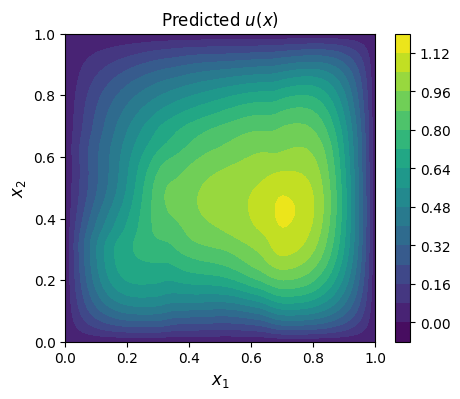}}\\
    \setcounter {subfigure} 0(a.2){
    \includegraphics[scale=0.18]{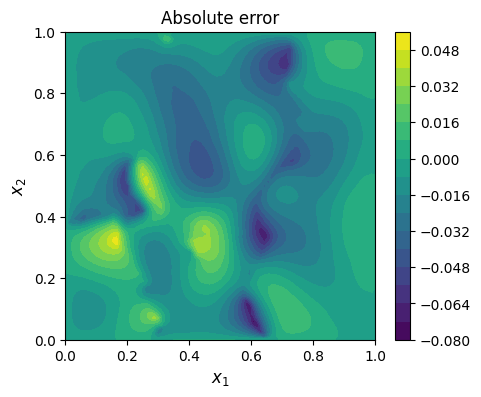}}
    \setcounter {subfigure} 0(b.2){
    \includegraphics[scale=0.18]{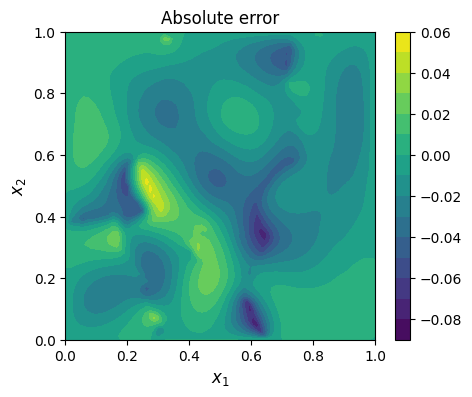}}
    \setcounter {subfigure} 0(c.2){
    \includegraphics[scale=0.18]{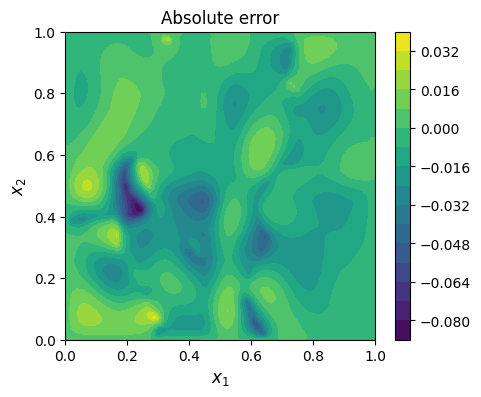}}
    \setcounter {subfigure} 0(d.2){
    \includegraphics[scale=0.18]{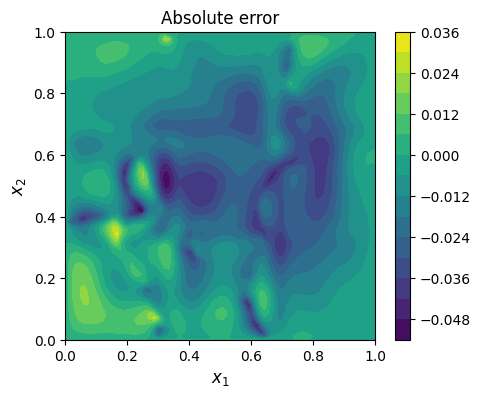}}
    \caption{Operator learning: 2D Darcy flow equation: first row: prediction of the PINN models. Second row: pointwise absolute error. From left to right, the corresponding situations in the Tab.~\ref{Tab:DeepOnetDarcy} are as follows.}
    \label{Fig.DeepOnetDarcy_1}
\end{figure}

\begin{table}[!ht]
    \centering
    \begin{tabular}{|c|c|c|c|}
    \hline
            $M_p$  & Trunk net structure&Rel. $L_{\infty}$& Rel. $L_{2}$  \\
    \hline
         1&  [1,128,128,128,128]
 &3.10e-02 & 3.58e-02 \\
 
         1& [1,128]  &  3.40e-02 &3.88e-01 \\
         
         4& [1,128]  & 2.34e-02&2.72e-02 \\
         
         9 &[1,128]  &2.26e-02 &2.60e-02 \\

    \hline
    \end{tabular}
    \caption{Operator learning: 2D Darcy flow equation: relative $L_\infty$ and relative $L_2$ prediction error for different settings for the number of PoU ($M_p$) and trunk net structures.}
    \label{Tab:DeepOnetDarcy}
\end{table}

\newpage
\section{Conclusion and discussion}

The overall framework of the paper is illustrated in Fig.~\ref{Fig.framwork}.

\begin{figure}[!hbtp]
\centering
\includegraphics[width=1\textwidth]{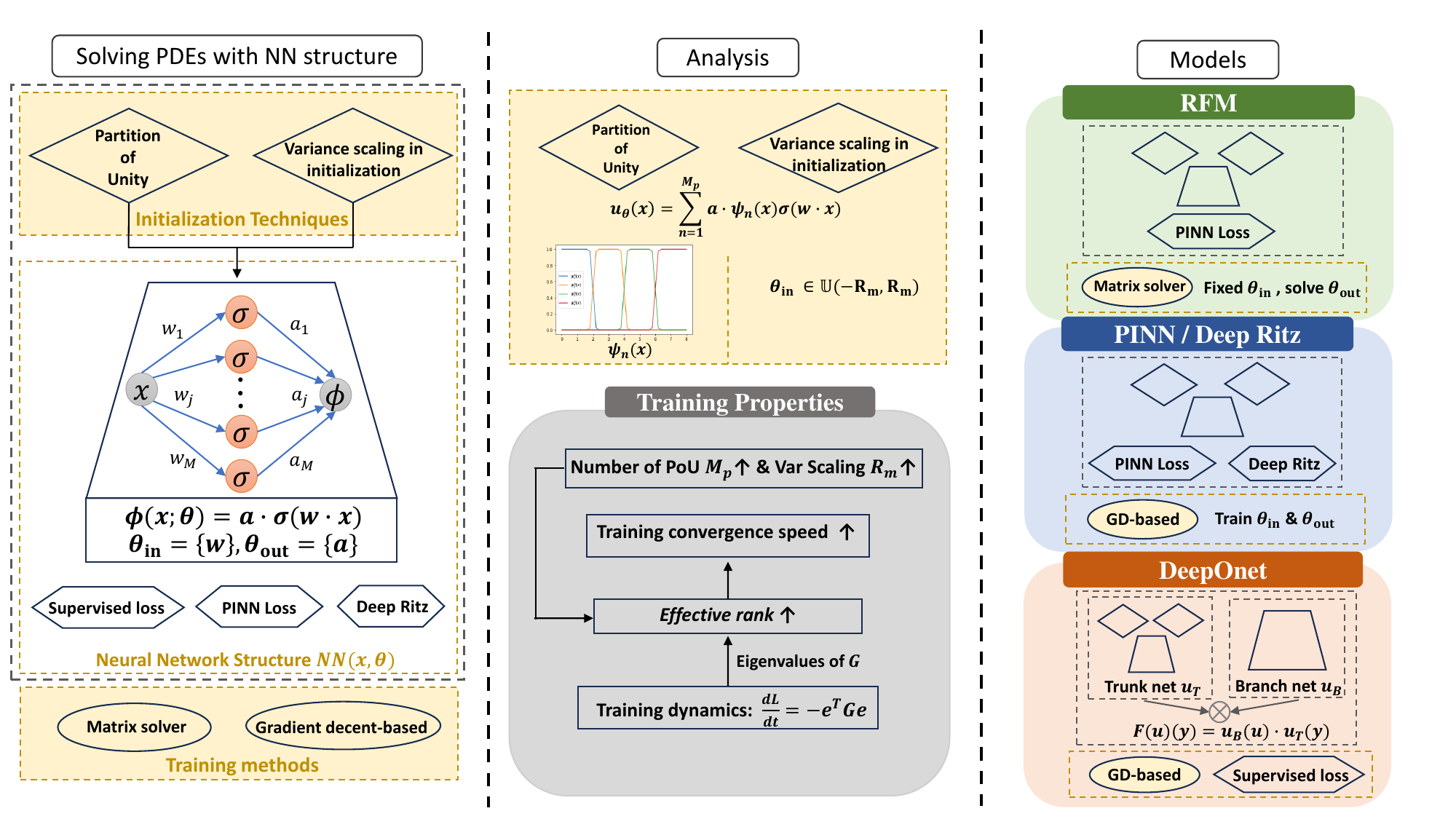}
\caption{The framework of the paper.}
\label{Fig.framwork}
\end{figure}

This paper investigates the training dynamics of NN-based methods for solving partial differential equations (PDEs). We first analyze the training dynamics and discover that the eigenvalue distribution of the kernel $\mG$ influences the convergence rate of the training error. Based on this finding, we apply the concept of effective rank $\operatorname{erank}(\mG)$~\cite{roy2007effective} as a metric for quantifying training difficulty. Our theoretical and experimental results demonstrate that a larger effective rank leads to faster convergence of the training error. We then focus on two initialization techniques used in random feature models~\cite{chen2022bridging} for solving PDEs: PoU and VS. For two-layer NNs, we theoretically establish a relationship between the number of PoU ($M_p$) and the effective rank, $\operatorname{erank}(\mPhi_{\mathrm{PoU}}) \approx M_p \operatorname{erank}(\mPhi)$ assuming a sufficiently large number of grid points ($N$) and hidden neurons ($M$). Our experimental observations corroborate this theoretical finding. We further demonstrate through experiments that increasing the VS ($R_m$) also leads to a larger effective rank. These findings are validated in the context of NNs, confirming the effectiveness of the effective rank metric and the two initialization techniques. Finally, we conduct extensive numerical experiments to demonstrate that both PoU and VS accelerate training in various NN-based PDE solvers, including PINN, Deep Ritz, and even DeepOnet for operator learning.

In addition to the initialization techniques explored in this work, several other aspects of NN-based PDE solvers deserve further investigation. One promising avenue for future research is to explore how the structure of the NN itself influences the effective rank and, consequently, the training dynamics. Examining the interplay between network architecture and training efficiency could lead to the development of even more effective NN designs for solving PDEs.

Another important consideration is the balance between the residual loss $L_{\text{res}}$ and the boundary condition loss $L_{\text{bc}}$ during training. Our observations suggest that the effective rank associated with the residual loss differs significantly from that of the supervised loss commonly used for boundary conditions. This finding raises the potential for designing the balance parameter $\gamma$ based on the effective rank, potentially leading to improved training convergence and accuracy.

Furthermore, our analysis focuses on gradient-based training methods. We observed that these methods can lead to slow or incomplete convergence of small eigenvalues in the kernel $\mG$, which may contribute to the accuracy gap between NN-based and traditional PDE solvers. In contrast, random feature methods for solving PDEs, where the parameters of the nonlinear layer are fixed and a direct matrix solver is employed, tend to achieve higher accuracy. The observation that the eigenvalue distribution of the kernel $\mG$ stabilizes during the later stages of training suggests the potential for developing hybrid solvers that combine gradient-based and direct matrix-based approaches. This could offer a promising avenue for achieving both high accuracy and computational efficiency in NN-based methods for PDE solution.

\section*{Acknowledgements}
This work of T.L. is sponsored by the National Key R\&D Program of China Grant No. 2022YFA1008200 (T. L.), the National Natural Science Foundation of China Grant No. 12101401 (T. L.),  Shanghai Municipal Science and Technology Key Project No. 22JC1401500 (T. L.). The work of Y.X. was supported by the Project of Hetao Shenzhen-HKUST Innovation Cooperation Zone HZQB-KCZYB-2020083.

\bibliographystyle{siamplain}
\bibliography{extracted.bib}

\end{document}